\DeclareMathAlphabet{\mathpzc}{OT1}{pzc}{m}{it}
\begin{document}

\theoremstyle{plain}

\newtheorem{theorem}{Theorem}[section]
\newtheorem{lemma}[theorem]{Lemma}
\newtheorem{proposition}[theorem]{Proposition}
\newtheorem{corollary}[theorem]{Corollary}
\newtheorem{definition}[theorem]{Definition}
\newtheorem{Ass}[theorem]{Assumption}
\theoremstyle{definition}
\newtheorem{remark}[theorem]{Remark}
\newtheorem{SA}[theorem]{Standing Assumption}
\newtheorem*{observation}{Observations}
\newtheorem*{RL}{Comments on Related Literature}

\renewcommand{\chapterautorefname}{Chapter} 
\renewcommand{\sectionautorefname}{Section} 

\crefname{lemma}{lemma}{lemmas}
\Crefname{lemma}{Lemma}{Lemmata}
\crefname{corollary}{corollary}{corollaries}
\Crefname{corollary}{Corollary}{Corollaries}

\newcommand{\m}{\mathfrak{m}}
\newcommand{\n}{\mathfrak{n}}
\newcommand{\q}{\mathfrak{q}}
\def\stackrelboth#1#2#3{\mathrel{\mathop{#2}\limits^{#1}_{#3}}}
\newcommand{\M}{\mathcal{M}_1}

\renewcommand{\theequation}{\thesection.\arabic{equation}}
\numberwithin{equation}{section}

\newcommand{\1}{\mathds{1}}
\renewcommand{\epsilon}{\varepsilon}
\newcommand{\X}{\mathsf{X}}
\newcommand{\Z}{\mathsf{Z}}
\newcommand{\B}{\mathsf{B}}

\title[On the Relation of Diffusions and their Speed measures]{On the Relation of One-Dimensional Diffusions on Natural Scale and their Speed measures} 
\author[D. Criens]{David Criens}
\address{D. Criens - Albert-Ludwigs University of Freiburg, Ernst-Zermelo-Str. 1, 79104 Freiburg, Germany}
\email{david.criens@stochastik.uni-freiburg.de}

\keywords{diffusion, speed measure, homeomorphism, convergence of diffusions, sufficient and necessary conditions, limit theorem, vague convergence, weak convergence, Feller-Dynkin property, It\^o diffusion.\vspace{1ex}}

\subjclass[2010]{60J60,	60G07, 60F17}

\thanks{Financial support from the DFG project No. SCHM 2160/15-1 is gratefully acknowledged. \\}

\date{\today}
\maketitle

\frenchspacing
\pagestyle{myheadings}

\begin{abstract}
It is well-known that the law of a one-dimensional diffusion on natural scale is fully characterized by its speed measure. 
C.~Stone proved a continuous dependence of diffusions on their speed measures. 
In this paper we establish the converse direction, i.e. we prove a continuous dependence of the speed measures on their diffusions. 
Furthermore, we take a topological point of view on the relation. 
More precisely, for suitable topologies, we establish a homeomorphic relation between the set of regular diffusions on natural scale without absorbing boundaries and the set of locally finite speed measures. 
\end{abstract}

\section{On the Relation of Diffusions and their Speed Measures}
\subsection{Introduction}
It is well-known (see e.g. \cite{breiman1968probability,itokean74}) that the law of a one-dimensional regular continuous strong Markov process on natural scale (called \emph{diffusion} in this short section) is fully characterized by its speed measure. 
Among other things, Stone \cite{stone} proved that diffusions depend continuously on their speed measures and Brooks and Chacon~\cite{brooks} established the converse direction for real-valued diffusions, i.e. they proved a continuous dependence of the speed measures on the diffusions. 

The main contribution of this paper is a converse to Stone's theorem for general diffusions. 
The real-valued and the general case distinguish in two important points: For real-valued diffusions there is no issue with the boundary behavior and the corresponding speed measures are locally finite, which in particular means they can be endowed with the vague topology.
To treat the general case we use a new method of proof, which is quite different to those of Brooks and Chacon. Below we comment in more detail on the methods and compare them to each other.

As a second contribution, we consider the relation of diffusions and their speed measures from a topological perspective. Namely, for suitable topologies, we deduce a homeomorphic relation between the set of regular diffusions on natural scale without absorbing boundaries and the set of locally finite speed measures. As an application of the homeomorphic relation we discuss properties of certain subsets of the set of diffusions without absorbing boundaries, namely those with the Feller--Dynkin property and It\^o diffusions with open state space. More precisely, we show that both of these subsets are dense Borel sets which are neither closed nor open.

The remainder of this paper is structured as follows. In Section \ref{sec: basic setting} we introduce our setting and recall the most important terminologies. Thereafter, in Section \ref{sec: main} we present our results and we comment on related literature. Finally, in Section \ref{sec: pf} we present the proof of our main theorem, i.e. the converse to Stone's theorem.

\subsection{Setting} \label{sec: basic setting}
We work with the canonical setting for diffusions as introduced in \cite[Section~V.25]{RW2}. A quite complete treatment of the theory is given in the monograph of It\^o and McKean \cite{itokean74}. Shorter introductions can also be found in the monographs \cite{breiman1968probability,RY}.
Let \(J \subset \mathbb{R}\) be a finite or infinite, closed, open or half-open interval and define \(\Omega\) to be the space of continuous functions \(\mathbb{R}_+ \to J\) endowed with the local uniform topology. The coordinate process on \(\Omega\) is denoted by \(\X\), i.e. \(\X_t (\omega) = \omega(t)\) for \(t \in \mathbb{R}_+\) and \(\omega \in \Omega\). 
The Borel \(\sigma\)-field on \(\Omega\) is given by \(\mathcal{F} \triangleq \sigma (\X_s, s \geq 0)\).
For any time \(t \in \mathbb{R}_+\) we also set \(\mathcal{F}_t \triangleq \sigma (\X_s, s \leq t)\) and we define the shift operator \(\theta_t \colon \Omega \to \Omega\) by \((\theta_t \omega) (s) = \omega (t + s)\) for \(s, t \in \mathbb{R}_+\). Let \(\M\) be the set of probability measures on \((\Omega, \mathcal{F})\) endowed with the weak topology.

We call \((J \ni x \mapsto P_x \in \M)\) a \emph{(canonical) diffusion}, if \(x \mapsto P_x(A)\) is measurable for all \(A \in \mathcal{F}\),\footnote{this is equivalent to saying that \((x \mapsto P_x)\) is \(\mathcal{B}(J)/\mathcal{B}(\M)\) measurable, see \cite[Theorem 19.7]{aliprantis}} \(P_x(\X_0 = x) = 1\) for all \(x \in J\), and for any \((\mathcal{F}_{t+})_{t \geq 0}\)-stopping time \(\tau\) and any \(x \in J\), \(P_{\X_\tau}\) is the regular conditional \(P_x\)-distribution of \(\theta_{\tau} \X\) on \(\{\tau < \infty\}\). The final part is the strong Markov property. 
A diffusion \((x \mapsto P_x)\) is called \emph{regular} if for all \(x \in J^\circ\) and~\(y \in J\)
\begin{align}\label{eq: regular}
P_x(\gamma_y < \infty) > 0,
\end{align}
where \(\gamma_y \triangleq \inf (s \geq 0
 \colon \X_s = y)\), and it is called \emph{completely regular}\footnote{this terminology is new in the sense that it does not appear in \cite{breiman1968probability, itokean74,RY,RW2}} if \eqref{eq: regular} holds for all \(x, y \in J\).
 Clearly, regularity and complete regularity are equivalent for open~\(J\). In case \(J\) is closed or half-open, complete regularity means that closed boundaries are reflecting (i.e. neither exit nor absorbing in the language from \cite[Section 16.7]{breiman1968probability}). 
We say that a regular diffusion \((x \mapsto P_x)\) is on \emph{natural scale} if for all \(a, b, x \in J\) with \(a < x < b\) we have 
\[
P_x (\gamma_b < \gamma_a) = \frac{x - a}{b - a}. 
\]
Any regular diffusion can be brought to natural scale via a homeomorphic space transformation (\cite[Proposition~16.34]{breiman1968probability}). 
Let \((x \mapsto P_x)\) be a regular diffusion on natural scale.
According to \cite[Theorem 16.36]{breiman1968probability}, there exists a unique locally finite measure \(\m\) on \((J^\circ, \mathcal{B}(J^\circ))\) such that for any \(a < b\) with \([a, b] \subset J^\circ\) we have 
\[
E_x \big[ \gamma_a \wedge \gamma_b\big] = \int G_{(a, b)} (x, y) \m (dy), \quad x \in [a, b], 
\]
where \(G_{(a, b)}\) is the Green function as defined in \cite[Eq. 16.35]{breiman1968probability}. Furthermore, by \cite[Theorem~16.47]{breiman1968probability}, if the left boundary point \(l\) is in~\(J\), then \(\m(\{l\})\) can be defined such that for any~\(b \in J^\circ\)
\[
E_x \big[ \gamma_b \big] = \int G_{[l, b)} (x, y) \m (dy), \quad x \in [l, b], 
\]
where \(G_{[l, b)}\) is the symmetrized Green function as defined in \cite[Eq. 16.46]{breiman1968probability}. A similar statement holds for right boundary points which are in \(J\). In this manner we get a measure \(\m\) on \((J, \mathcal{B}(J))\) which is called the \emph{speed measure} associated to the regular diffusion \((x \mapsto P_x)\). The speed measure is locally finite if and only if the corresponding diffusion is completely regular.
Within the class of regular diffusions the speed measure determines a diffusion uniquely (\cite[Corollary 16.73]{breiman1968probability}).

\subsection{Main results} \label{sec: main}
In the following let \(l\) be the left boundary point of \(J\) and let \(r\) be the right boundary point.
Take a sequence \(\m^0, \m^1, \m^2, \dots\) of speed measures on \(J\). 
\begin{definition} \label{def: LT conv}
We say that the sequence \(\m^1, \m^2, \dots\) converges in the \emph{speed measure sense} to \(\m^0\), which we denote by \(\m^n \Rightarrow \m^0\), if the following hold:
\begin{enumerate}
	\item[\textup{(a)}] \(\m^n|_{J^\circ} \to \m^0|_{J^\circ}\) vaguely, i.e. \(\int_{J^\circ} f (x) \m^n(dx) \to \int_{J^\circ} f (x) \m^0(dx)\) for all \(f \in C_c (J^\circ)\).
	\item[\textup{(b)}] If \(l \in J\), then \(\int f (x) \m^n(dx) \to \int f(x) \m^0(dx)\) for all \(0 \leq f \in C (J)\) such that \(f (l) > 0\) and \(f = 0\) off \([l, y)\) for some \(y \in J^\circ\).
	\item[\textup{(c)}] If \(r \in J\), then \(\int f (x) \m^n(dx) \to \int f(x) \m^0(dx)\) for all \(0 \leq f \in C(J)\) such that \(f (r) > 0\) and \(f = 0\) off \((y, r]\) for some \(y \in J^\circ\).
\end{enumerate}
\end{definition}
\begin{remark}
	If \(\m^0, \m^1, \m^2, \dots\) are locally finite, then \(\m^n \Rightarrow \m^0\) if and only if \(\m^n \to \m^0\) vaguely.
\end{remark}

For each \(n \in \mathbb{Z}_+\) let \((x \mapsto P^n_x)\) be the regular diffusion on natural scale with speed measure \(\m^n\). Moreover, take a sequence \(x^0, x^1, x^2, \dots \in J\). 
Among other things, Stone \cite{stone} proved the following theorem.

\begin{theorem}[\cite{stone}] \label{theo: stone}
	If \(\m^n \Rightarrow \m^0\) and \(x^n \to x^0\), then \(P^n_{x^n} \to P^0_{x^0}\) weakly.
\end{theorem}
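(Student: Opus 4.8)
The plan is to realize all the diffusions on one common probability space as time changes of a single Brownian motion, and then to reduce the weak convergence \(P^n_{x^n} \to P^0_{x^0}\) to an almost sure convergence of the time-changed paths in the local uniform topology carried by \(\Omega\). Concretely, let \(W\) be a standard Brownian motion and, for each \(n\), let \(W^n\) be the (possibly reflected or stopped) Brownian motion on \(J\) started at \(x^n\), with jointly continuous local time field \((L^{n,y}_t)_{y \in J,\, t \geq 0}\); since \(x^n \to x^0\) we have both \(W^n \to W^0\) and \(L^{n,\cdot}_\cdot \to L^{0,\cdot}_\cdot\) locally uniformly. Define the continuous additive functional
\[
A^n_t \triangleq \int_{J^\circ} L^{n, y}_t \, \m^n(dy) + \1_{\{l \in J\}}\, \m^n(\{l\})\, L^{n, l}_t + \1_{\{r \in J\}}\, \m^n(\{r\})\, L^{n, r}_t ,
\]
where \(W^n\) is reflected, sticky, or stopped at a boundary point in accordance with the classification encoded by \(\m^n\). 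Writing \(\tau^n_t \triangleq \inf\{s \geq 0 : A^n_s > t\}\) for the right-continuous inverse, the classical It\^o--McKean time-change construction (see \cite{itokean74}) shows that \(X^n \triangleq W^n_{\tau^n}\) has law \(P^n_{x^n}\). The entire problem is thereby transported onto the single path space of \(W\).

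The analytic heart of the argument is to prove that \(A^n_\cdot \to A^0_\cdot\) locally uniformly, almost surely. Here I would exploit the joint continuity of Brownian local time: on \([0,T]\) the path of \(W^0\) stays in a compact set, so for fixed \(t \leq T\) the profile \(y \mapsto L^{0,y}_t\) is continuous with compact support, and \(L^{n,y}_t \to L^{0,y}_t\) uniformly in \(y\). Splitting off a neighbourhood of the boundary, the interior contribution converges by the vague convergence in Definition~\ref{def: LT conv}(a), applied to the continuous, compactly supported test functions \(y \mapsto L^{0,y}_t\); the boundary contributions, which involve profiles positive at \(l\) or \(r\), are controlled precisely by conditions (b) and (c). This yields pointwise convergence \(A^n_t \to A^0_t\); since each \(A^n_\cdot\) is nondecreasing and the limit \(A^0_\cdot\) is continuous, a Dini-type argument for monotone functions upgrades this to locally uniform convergence.

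From the locally uniform convergence of the additive functionals I would pass to the inverses and the paths. Regularity forces \(\m^0\) to charge every nonempty open subinterval of \(J^\circ\), so \(A^0_\cdot\) is strictly increasing up to its (possibly finite) terminal value and its inverse \(\tau^0\) is continuous; locally uniform convergence of nondecreasing functions with strictly increasing continuous limit then gives \(\tau^n_t \to \tau^0_t\) locally uniformly in \(t\). Combining this with the pathwise uniform continuity of \(W^0\) on compacts, via \(|W^n_{\tau^n_t} - W^0_{\tau^0_t}| \leq \sup_{s}|W^n_s - W^0_s| + |W^0_{\tau^n_t} - W^0_{\tau^0_t}|\), yields \(X^n \to X^0\) in the local uniform topology almost surely. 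Since almost sure convergence implies convergence in distribution and \(\mathrm{Law}(X^n) = P^n_{x^n}\), \(\mathrm{Law}(X^0) = P^0_{x^0}\), we conclude \(P^n_{x^n} \to P^0_{x^0}\) weakly.

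The step I expect to be the main obstacle is the uniform handling of the boundary behaviour. In the interior everything reduces cleanly to vague convergence against the continuous, compactly supported local-time profiles, but at a boundary point \(l \in J\) one must simultaneously (i) realise the correct boundary behaviour (reflecting, sticky, or absorbing) in the common time-change picture, (ii) match it with the atom \(\m^n(\{l\})\) and with the exact form of conditions (b)/(c), and (iii) control the possibility that \(A^0_\cdot\) develops a flat stretch or a finite terminal value, corresponding to the process being absorbed or reaching the boundary in finite time. Ensuring that the time changes \(\tau^n\) and the associated lifetimes converge consistently along the whole sequence, so that no mass of the path escapes in the limit, is the delicate point; by contrast, the interior estimate and the Dini upgrade are comparatively routine once the joint continuity of local time is in hand.
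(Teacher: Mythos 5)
Your proposal follows essentially the same route as the paper's sketch of Stone's theorem: the It\^o--McKean time change of (reflected) Brownian motion, convergence of the additive functionals \(A^n\) obtained by testing the local-time profiles against \(\m^n\) (compactly supported interior profiles for condition (a), boundary-touching profiles for (b)/(c)), and then convergence of the inverse time changes and of the time-changed paths in the local uniform topology. Two small corrections: in this construction the underlying process is always reflected Brownian motion --- stickiness and absorption are produced entirely by the time change through the atom \(\m^n(\{l\})\) (infinite in the absorbing case), not by modifying \(W^n\) itself --- and one should not insist on locally uniform convergence of \(A^n\) (which can fail, e.g.\ when \(A^0\) jumps to infinity at an absorbing boundary); it suffices, as in the paper, to have pointwise convergence \(A^n_t \to A^0_t\) off the single exceptional time at which the path first hits the boundary, and to apply the monotone-plus-continuous-limit upgrade to the inverses \(\tau^n\) rather than to \(A^n\).
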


\begin{corollary} \label{coro: cont}
If \((x \mapsto P_x)\) is a regular diffusion on natural scale, then \(x \mapsto P_x\) is a continuous function from \(J\) into \(\M\), i.e. \((x \mapsto P_x) \in C(J, \M)\).
\end{corollary}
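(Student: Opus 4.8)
The plan is to deduce this directly from Stone's theorem (\Cref{theo: stone}) by specializing it to the constant sequence of speed measures. First I would record the (soft) topological reduction: since \(J\) is an interval in \(\mathbb{R}\), it is metrizable and in particular first countable, and therefore a map out of \(J\) is continuous if and only if it is sequentially continuous. (For good measure, \(\Omega\) is a Polish space under the local uniform topology, so \(\M\) equipped with the weak topology is itself Polish and hence metrizable, which also makes convergence in \(\M\) tractable via sequences.) Thus it suffices to show that \(x^n \to x^0\) in \(J\) implies \(P_{x^n} \to P_{x^0}\) weakly.

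Next I would fix the speed measure \(\m\) associated to the given regular diffusion on natural scale and form the constant sequence \(\m^n \triangleq \m\) for every \(n \in \mathbb{Z}_+\). A constant sequence trivially satisfies conditions (a)--(c) of \Cref{def: LT conv}: in each case the integrals \(\int f\, d\m^n\) do not depend on \(n\), so they converge to \(\int f\, d\m^0 = \int f\, d\m\). Hence \(\m^n \Rightarrow \m^0\) in the speed measure sense. Moreover, because the speed measure determines the regular diffusion on natural scale uniquely (\cite[Corollary 16.73]{breiman1968probability}), the diffusion \((x \mapsto P^n_x)\) associated to \(\m^n = \m\) coincides with the given \((x \mapsto P_x)\) for every \(n\).

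Finally, given an arbitrary sequence \(x^n \to x^0\) in \(J\), I would invoke \Cref{theo: stone} with the above choice of speed measures and starting points. It yields \(P^n_{x^n} \to P^0_{x^0}\) weakly, and since \(P^n_x = P_x\) for all \(n\) and all \(x\), this is precisely \(P_{x^n} \to P_{x^0}\). Sequential continuity of \(x \mapsto P_x\) follows, and the reduction above upgrades this to continuity, giving \((x \mapsto P_x) \in C(J, \M)\).

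I do not anticipate a genuine obstacle: the entire analytic content is supplied by Stone's theorem, and the only points requiring care are the routine verification that a constant sequence of speed measures converges in the speed measure sense, and the first-countability of \(J\) that legitimizes arguing with sequences. Both are immediate, so the corollary is essentially a one-line application of \Cref{theo: stone} once these observations are in place.
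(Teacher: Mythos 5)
Your proposal is correct and coincides with the paper's intended argument: the corollary is stated without proof as an immediate consequence of \Cref{theo: stone}, obtained exactly as you do by specializing to the constant sequence \(\m^n \equiv \m\) (so that \(\m^n \Rightarrow \m^0\) trivially) and using metrizability of \(J\) and \(\M\) to pass from sequential continuity to continuity.
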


\begin{corollary} \label{coro: loc uni}
	If \(\m^n \to \m^0\), then \((x \mapsto P^n_x) \to (x \mapsto P^0_x)\) locally uniformly. 
\end{corollary}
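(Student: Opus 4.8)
The plan is to deduce this from Stone's theorem (\cref{theo: stone}) together with a standard compactness argument. Since \(\Omega\) is a Polish space, the weak topology on \(\M\) is metrizable; fix a metric \(d\) inducing it. Local uniform convergence of \((x \mapsto P^n_x)\) to \((x \mapsto P^0_x)\) then means precisely that \(\sup_{x \in K} d(P^n_x, P^0_x) \to 0\) for every compact set \(K \subseteq J\), so it suffices to fix such a \(K\) and establish uniform convergence on it. The hypotheses needed for the compactness argument will be supplied by \cref{theo: stone} and \cref{coro: cont}.

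The engine is the following elementary fact about metric-space-valued maps: if \(K\) is a compact metric space, \((Y,d)\) is a metric space, and \(f_n, f \colon K \to Y\) with \(f\) continuous, then \(f_n \to f\) uniformly on \(K\) if and only if \(f_n(x_n) \to f(x)\) for every convergent sequence \(x_n \to x\) in \(K\). The forward direction is immediate from the triangle inequality and continuity of \(f\). For the converse I would argue by contradiction: if uniform convergence fails, there are \(\epsilon > 0\), a subsequence \((n_k)\), and points \(x_{n_k} \in K\) with \(d(f_{n_k}(x_{n_k}), f(x_{n_k})) \geq \epsilon\); by compactness of \(K\) I may pass to a further subsequence along which \(x_{n_k} \to x \in K\), and, extending \((x_{n_k})\) to a full sequence converging to \(x\) by setting the remaining terms equal to \(x\), the hypothesis gives \(f_{n_k}(x_{n_k}) \to f(x)\), while \(f(x_{n_k}) \to f(x)\) by continuity of \(f\); together these force \(d(f_{n_k}(x_{n_k}), f(x_{n_k})) \to 0\), a contradiction.

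It remains to check the two inputs of this fact for \(f_n = (x \mapsto P^n_x)\) and \(f = (x \mapsto P^0_x)\) restricted to \(K\), with \(Y = \M\). Continuity of the limit \(f\) is exactly \cref{coro: cont}. For the sequential condition, let \(x_n \to x_0\) in \(K\); since \(\m^n \to \m^0\) entails \(\m^n \Rightarrow \m^0\) (by the Remark following \cref{def: LT conv}, the measures here being locally finite), \cref{theo: stone} yields \(P^n_{x_n} \to P^0_{x_0}\) weakly, i.e. \(f_n(x_n) \to f(x_0)\). Thus \(f_n \to f\) uniformly on \(K\), and as \(K\) was arbitrary the claimed local uniform convergence follows. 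I do not expect a genuine obstacle here: essentially all of the content is carried by \cref{theo: stone} and \cref{coro: cont}, and the only point requiring care is the converse direction of the elementary fact above, where the compactness of \(K\) is used to extract a convergent subsequence of starting points and thereby upgrade the pointwise statement of Stone's theorem to a uniform one.
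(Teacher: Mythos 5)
Your proof is correct and follows essentially the same route as the paper: both deduce from \cref{theo: stone} that \(P^n_{x^n} \to P^0_{x^0}\) for every convergent sequence of starting points (i.e.\ continuous convergence) and then upgrade this to local uniform convergence. The only difference is that the paper cites Carath\'eodory's theorem on the equivalence of continuous and locally uniform convergence, whereas you prove that elementary equivalence directly via a compactness and subsequence argument.
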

\begin{proof}
	Thanks to Theorem \ref{theo: stone}, for every sequence \(x^0, x^1, x^2, \dots \in J\) with \(x^n \to x^0\) we have \(P^n_{x^n} \to P^0_{x^0}\) weakly. 
	In other words, the sequence \((x \mapsto P^1_x), (x \mapsto P^2_x), \dots\) converges continuously to \((x \mapsto P^0_x)\). A theorem of Carath\'eodory (\cite[Theorem on pp. 98--99]{remmert}) shows that continuous convergence is equivalent to local uniform convergence.
\end{proof}

To explain the main idea behind Theorem \ref{theo: stone}, let us prove it for the case \(J = [0, \infty)\) and \(x^n \equiv x_0\). Of course, the main steps of the proof are borrowed from \cite{stone}.
\\

\noindent
\emph{Proof for Theorem \ref{theo: stone} in case \(J = [0, \infty)\) and \(x^n \equiv x_0\).}
The key idea is to use the It\^o--McKean construction of a regular diffusion on natural scale as a time change of Brownian motion. Let \(\B\) be a Brownian motion starting in \(x_0\) which is reflected at the origin, denote its local time process by \(\{L(t, y) \colon t, y\in \mathbb{R}_+\}\) and set 
\[
T^n_t \triangleq \int L (t, y) \m^n(dy), \quad t \in \mathbb{R}_+, \ n \in \mathbb{Z}_+.
\]
	Furthermore, set
	\[
	S^n_t \triangleq \inf (s \geq 0 \colon T^n_{s + } > t), \quad t \in \mathbb{R}_+.
	\]
		The following discussion should be read up to a null set which depends on standard properties of the local time process (see e.g. \cite[Section 2.8]{freedman}).
	As explained in the discussion below \cite[Definition 16.55]{breiman1968probability}, \(t \mapsto S^n_t\) is finite, continuous and increasing. 
	If \(t < \gamma_0 (\B)\), then \(x \mapsto L (t, x)\) is continuous and supported on a compact subsets of \(J^\circ\) such that part~(a) of Definition \ref{def: LT conv} yields that \(T^n_t \to T^0_t\).
	By the Corollary on p. 640 in \cite{stone} and the representation of the local time process for reflected Brownian motion as discussed above \cite[Definition 16.55]{breiman1968probability},
	if \(t > \gamma_0(\B)\) then the map \(x \mapsto L (t, x)\) has the same properties as \(f\) in part (b) of Definition \ref{def: LT conv}, which then yields that \(T^n_t \to T^0_t\).
	In summary, \(\m^n \Rightarrow \m^0\) implies that \(T^n_t \to T^0_t\) for all \(t \not = \gamma_0 (\B)\). 
	We conclude that \(S^n_t \to S^0_t\) for all \(t \in \mathbb{R}_+\), see \cite[Lemma 1.1.1]{HaanFre}. As \(S^0, S^1, S^2, \dots\) are increasing and continuous, \cite[Theorem VI.2.15]{JS} further yields that \(S^n \to S^0\) uniformly on compact time intervals. Hence, we also have that a.s. \(\B_{S^n} \to \B_{S^0}\) uniformly on compact time intervals. Since \(\B_{S^n}\) has law \(P^n_{x_0}\) by \cite[Theorem 16.56]{breiman1968probability}, we get that \(P^n_{x_0} \to P^0_{x_0}\) weakly.
\qed\\

The following theorem is our main result. It can be viewed as a converse to Theorem~\ref{theo: stone}. Its proof is given in Section \ref{sec: pf} below.
\begin{theorem}\label{theo: new main1}
	If \(P^n_x \to P^0_x\) weakly for all \(x \in J\), then \(\m^n \Rightarrow \m^0\). 
\end{theorem}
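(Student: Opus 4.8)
The plan is to recover the speed measures from the diffusions through expected hitting-time functionals, for which the relevant Green functions are \emph{independent} of the speed measure, and then to invert this relation. Fix \([a,b]\subset J^\circ\) and set \(u^n(x)\triangleq E^n_x[\gamma_a\wedge\gamma_b]\) for \(x\in[a,b]\), so that
\[
u^n(x)=\int G_{(a,b)}(x,y)\,\m^n(dy),\qquad x\in[a,b].
\]
Since \(G_{(a,b)}(\cdot,y)\) is, up to normalisation, the concave tent function with second derivative \(-\delta_y\) (equivalently, \(u^n\) solves \(\frac{d}{d\m^n}\frac{d}{dx}u^n=-1\) on natural scale), each \(u^n\) is concave on \((a,b)\) with \(-(u^n)''=\m^n\) there. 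Hence, for part (a) of \(\m^n\Rightarrow\m^0\) it suffices to show \(u^n\to u^0\) pointwise on \((a,b)\) and to pass to second derivatives.

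The crux is the convergence of the expected exit times, and I would obtain it from the It\^o--McKean time change already used for \Cref{theo: stone}. Realising all \(P^n_x\) on one space through a single Brownian motion \(\B\) started at \(x\in(a,b)\) with local time \(L\), the exit time of \(\X\) from \((a,b)\) equals \(\int L(\sigma,y)\,\m^n(dy)\), where \(\sigma\triangleq\gamma_a(\B)\wedge\gamma_b(\B)<\infty\) a.s.\ does not depend on \(n\). First, weak convergence and a.s.\ continuity of the exit functional under \(P^0_x\) (interior points of a regular diffusion are crossed) give convergence in law of these exit times to the a.s.\ finite exit time under \(P^0_x\), hence tightness, i.e.\ \(\sup_n P^n_x(\gamma_a\wedge\gamma_b>T)\to 0\) as \(T\to\infty\). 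Second, choosing \([c,d]\subset(a,b)\) and \(\delta>0\) so that the event \(A_\delta\triangleq\{\min_{[c,d]}L(\sigma,\cdot)\ge\delta\}\) has probability \(p_0\triangleq\mathbf P(A_\delta)>0\) (possible since \(\min_{[c,d]}L(\sigma,\cdot)>0\) with positive probability, and \(A_\delta\) is free of \(n\)), on \(A_\delta\) we have \(\gamma_a\wedge\gamma_b\ge\delta\,\m^n([c,d])\). If \(\m^{n_k}([c,d])\to\infty\) along a subsequence, then \(P^{n_k}_x(\gamma_a\wedge\gamma_b>T)\ge p_0\) for all large \(k\) and every \(T\), contradicting tightness; this yields the \textbf{uniform local boundedness} \(\sup_n\m^n(K)<\infty\) for every compact \(K\subset J^\circ\). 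Third, enlarging the interval so that \([a,b]\) lies compactly inside \(J^\circ\), the same representation gives the domination \(\gamma_a\wedge\gamma_b\le\Lambda\,\sup_n\m^n([a,b])\) with \(\Lambda\triangleq\max_{[a,b]}L(\sigma,\cdot)\in L^1\), so the exit times are uniformly integrable. Uniform integrability together with convergence in law upgrades to \(E^n_x[\gamma_a\wedge\gamma_b]\to E^0_x[\gamma_a\wedge\gamma_b]\), i.e.\ \(u^n\to u^0\) pointwise. I expect this uniform-integrability/local-boundedness step to be the main obstacle, since any naive bound on \(u^n\) already presupposes control of \(\m^n\); the coupling breaks the circularity.

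With pointwise convergence of the concave functions \(u^n\) in hand, concavity forces local uniform boundedness, so a double integration by parts gives, for every \(\varphi\in C^2_c((a,b))\),
\[
\int\varphi\,d\m^n=-\int u^n\,\varphi''\,dx\longrightarrow-\int u^0\,\varphi''\,dx=\int\varphi\,d\m^0
\]
by dominated convergence. Letting \([a,b]\uparrow J^\circ\) and approximating a general \(f\in C_c(J^\circ)\) by such \(\varphi\) proves part (a).

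For the boundary conditions (b) and (c) I would run the same scheme with the boundary (symmetrised) Green function: when \(l\in J\), the identity \(E_x[\gamma_b]=\int G_{[l,b)}(x,y)\,\m(dy)\) for \(x\in[l,b)\) lets the expected hitting time of an interior point \(b\) detect the mass of \(\m^n\) up to and including \(l\), matching the admissible test functions \(f\) (positive at \(l\), supported on \([l,y)\)). In the reflecting case the argument is the interior one verbatim, now with \(\B\) reflected at \(l\) and its boundary local time entering the time change, and the inversion incorporates the atom \(\m^n(\{l\})\) through the reflecting boundary condition satisfied by \(u^n(x)=E^n_x[\gamma_b]\) at \(l\). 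The genuinely new point, which I expect to be the most delicate part of this step, is the absorbing case \(\m^0(\{l\})=\infty\): here \(\gamma_b=\infty\) and \(E^0_l[\gamma_b]=\infty\), so one cannot pass through finite expectations; instead one argues directly from the time-change representation that \(P^n_l\to P^0_l\) (the law of the path constantly equal to \(l\)) forces \(\m^n(\{l\})\to\infty\), delivering the required divergence \(\int f\,d\m^n\to\infty\). The right boundary is symmetric.
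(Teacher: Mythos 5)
Your argument for part (a) of Definition \ref{def: LT conv} looks correct, but it takes a genuinely different route from the paper. You work through the It\^o--McKean coupling: a single Brownian motion \(\B\) with local time \(L\), the representation of the exit time as \(\int L(\sigma,y)\,\m^n(dy)\), a contradiction argument for \(\sup_n\m^n(K)<\infty\), domination by \(\Lambda\,\sup_n\m^n([a,b])\) for uniform integrability, and finally the inversion \(-(u^n)''=\m^n\) tested against \(\varphi\in C^2_c\). This is close in spirit to Brooks--Chacon \cite{brooks} (whose first step is exactly the uniform local boundedness by contradiction), whereas the paper deliberately avoids local time and the time change altogether: it tests the \(f\)-weighted additive functional \(\int_0^{\tau^-_a\wedge\tau^+_b}f(\X_s)\,ds\) via the identity \eqref{eq: def SM}, and replaces your coupling-based uniform integrability by the purely Markovian second moment bound of Lemma \ref{lem: int1}, i.e. \(P^n_{x_0}(\tau^-_a\wedge\tau^+_b>mt)\le\alpha^m\) with \(\alpha<1\) uniformly in \(n\), obtained from upper semicontinuity of \(\sigma^\pm\) and compactness of \(\{P^n_{x_0}\}\). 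Both work; the paper's version buys independence from the canonical construction, yours buys an explicit dominating random variable. One detail to make explicit: the \(P^0_x\)-a.s. continuity of \(\tau^-_a\wedge\tau^+_b\) holds only for \(a,b\) outside a countable exceptional set (or requires the immediate-crossing property of regular diffusions), and this set must be removed before letting \([a,b]\uparrow J^\circ\).

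The boundary cases contain a genuine gap. In the absorbing case you claim that \(P^n_l\to P^0_l\) forces \(\m^n(\{l\})\to\infty\). This is false: take \(\m^n=\m^0|_{J^\circ}+n\,\delta_{l+1/n}\), so that \(\m^n(\{l\})=0\) for every \(n\), yet \(\m^n\Rightarrow\m^0\) and hence, by Theorem \ref{theo: stone}, \(P^n_x\to P^0_x\) for all \(x\). The divergence required by Definition \ref{def: LT conv}(b) is that of \(\int f\,d\m^n\), i.e. of the \(f\)-weighted mass \emph{near} \(l\), not of the atom at \(l\); an argument aimed at the atom alone cannot succeed. The repair is what the paper does: Portmanteau on the open set \(\{\tau^+_b>t\}\) gives \(E^n_l[\tau^+_b]\to\infty\), and the symmetrized Green identity \eqref{eq: sym GF} bounds \(\int f\,d\m^n\) from below by a positive multiple of \(E^n_l[\tau^+_b]\). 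In the reflecting case your "verbatim" claim also hides two real issues: first, you must pass to a subsequence along which \(l\) is reflecting for every \(P^n\) (otherwise \(E^n_l[\tau^+_b]=\infty\) and the finite-expectation machinery is unavailable); second, the inversion from convergence of \(x\mapsto E^n_x[\tau^+_b]=\int G_{[l,b)}(x,y)\,\m^n(dy)\) to convergence against the test functions of Definition \ref{def: LT conv}(b) is not the interior integration by parts, since those test functions are not compactly supported in \(J^\circ\); one needs an approximation and no-mass-escape argument near \(l\) (or, as in the paper, one tests the functional \(\int_0^{\tau^+_b}f(\X_s)/G_{[l,b)}(l,\X_s)\,ds\) directly, which sidesteps the inversion entirely).
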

Combining Corollary \ref{coro: loc uni} and Theorem \ref{theo: new main1} gives us the following:
\begin{corollary} \label{coro: main1}
	The following are equivalent:
	\begin{enumerate}
		\item[\textup{(i)}] \(\m^n \Rightarrow \m^0\).
				\item[\textup{(ii)}] \((x \mapsto P^n_x) \to (x \mapsto P^0_x)\) locally uniformly.
						\item[\textup{(iii)}] \(P^n_x \to P^0_x\) weakly for all \(x \in J\).
	\end{enumerate}
\end{corollary}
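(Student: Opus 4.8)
The plan is to prove the three-way equivalence by establishing the cycle (i) $\Rightarrow$ (ii) $\Rightarrow$ (iii) $\Rightarrow$ (i); two of the three links are already available as named results, so the work reduces to verifying the single trivial link and invoking the rest.

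First I would handle (i) $\Rightarrow$ (ii), which is nothing other than Corollary \ref{coro: loc uni}: starting from $\m^n \Rightarrow \m^0$, that corollary produces the local uniform convergence of $x \mapsto P^n_x$ to $x \mapsto P^0_x$. Since this statement compares functions valued in $\M$, I would first recall that the weak topology on $\M$ is metrizable, as $\Omega$ is Polish under the local uniform topology, so that ``locally uniformly'' may be read as uniform convergence on each compact subset of $J$ with respect to a fixed metric $d$ inducing the weak topology on $\M$.

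Next, for (ii) $\Rightarrow$ (iii) I would simply specialize: for each fixed $x \in J$, applying local uniform convergence to the compact singleton $K = \{x\}$ yields $d(P^n_x, P^0_x) \to 0$, i.e.\ $P^n_x \to P^0_x$ weakly. This requires no genuine argument. Finally, (iii) $\Rightarrow$ (i) is exactly the main result, Theorem \ref{theo: new main1}, which closes the cycle.

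The only substantive ingredient is Theorem \ref{theo: new main1} itself, and its proof is deferred to Section \ref{sec: pf}; within this corollary there is no obstacle at all, the proof being a direct assembly of the continuity statement stemming from Stone's theorem (through Corollary \ref{coro: loc uni}) and the converse main theorem.
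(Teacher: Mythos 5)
Your proposal is correct and matches the paper's argument exactly: the paper likewise obtains the corollary by combining Corollary \ref{coro: loc uni} for (i) \(\Rightarrow\) (ii), the trivial specialization to singletons for (ii) \(\Rightarrow\) (iii), and Theorem \ref{theo: new main1} for (iii) \(\Rightarrow\) (i).
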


It is interesting to note that on the set of regular diffusions on natural scale the sequential topologies of pointwise and local uniform convergence coincide.\\

\noindent
\emph{Comments on related literature.} 
For the real-valued case, i.e. \(J = \mathbb{R}\), Theorem \ref{theo: new main1} was proved in \cite{brooks}. Theorem \ref{theo: new main1} seems to be new in its generality. The result provides the complete picture for general state spaces and arbitrary boundary behavior.
Further, our method of proof is new and quite different to those from \cite{brooks}.  
As the method from \cite{brooks} heavily relies on the concept of vague convergence, it seems not to work for Theorem \ref{theo: new main1}, so that new ideas are necessary.
We now outline the main steps of the argument from \cite{brooks} and compare it to ours. In the following we take \(J = \mathbb{R}\). Note that \(\m^0, \m^1, \m^2, \dots\) are locally finite and that \(\m^n \Rightarrow \m^0\) if and only if \(\m^n \to \m^0\) vaguely.
The first step is to prove that the sequence \(\m^1, \m^2, \dots\) is uniformly bounded on compact subsets of the reals, which shows that \(\{\m^n \colon n \in \mathbb{N}\}\) is vaguely relatively compact (\cite[Proposition~3.16]{resnick}). In \cite{brooks} this is done by a contradiction argument.
To conclude \(\m^n \to \m^0\) vaguely it suffices to show that any vague accumulation point \(\q\) of \(\m^1, \m^2, \dots\) coincides with \(\m^0\). Assume that \(\m^n \to \q\) vaguely. 
First, \(\q\) is shown to be positive on any compact subset of the reals, which means it is a speed measure. In \cite{brooks} this is again done by a contradiction argument.
Second, let \(\B\) be a Brownian motion started at \(x_0\), denote its local time process by \(\{L(t, y) \colon t \in \mathbb{R}_+, y \in \mathbb{R}\}\) and let \(S\) be the inverse of 
\(t \mapsto \int L (t, y) \q(dy)\).
By the argument outlined in the proof of Theorem \ref{theo: stone}, \(P^n_{x_0}\) converges weakly to the law of \(\B_S\) and consequently, by the uniqueness of the limit, \(\B_S\) has law \(P^0_{x_0}\).
Now, \(\q = \m^0\) follows from the definition of the speed measure via the canonical form (i.e. as time change of Brownian motion, see \cite[Theorem~33.9]{Kallenberg}). It follows that \(\m^n \to \m^0\) vaguely.

From a technical point of view this proof heavily relies on the It\^o--McKean construction of a diffusion as a time change of Brownian motion, properties of the Brownian local time and the characterization of the speed measure via the canonical form. Our proof for Theorem~\ref{theo: new main1} uses non of these tools. Instead, we use the definition of the speed measure from the monographs \cite{breiman1968probability,RY}, a uniform second moment bound for exit times and the continuous mapping theorem.
\\

\noindent
\emph{A topological point of view.}
In the remainder of this section we look at the relation of diffusions and their speed measures from a topological point of view. 
Let \(\mathcal{S}\) be the set of all locally finite speed measures and let \(\mathcal{D}\) be the set of all completely regular diffusions on natural scale. 
Our goal is to establish a homeomorphic relation between \(\mathcal{S}\) and \(\mathcal{D}\). 
We endow \(\mathcal{S}\) with the vague topology, which turns it into a metrizible space. 
Thanks to Corollary~\ref{coro: cont}, we can treat \(\mathcal{D}\) as a subspace of \(C(J, \M)\) endowed with the local uniform topology, which renders it into a metrizible space.
\begin{remark} \label{rem: product topology}
	It would also be natural to consider regular diffusions as elements of the product space \(\M^J\). However, \(\M^J\) is not first countable (\cite[Theorem 7.1.7]{singh}) and hence we cannot a priori\footnote{The space of continuous functions \([0, 1] \to [0, 1]\) is \emph{not} sequential when endowed with the product topology, see \cite[Beispiel on p. 102]{janich}.} check continuity via sequential continuity. The space \(C(J, \M)\) on the other hand is metrizible and therefore also sequential. 
\end{remark}

Finally, let \(\Phi \colon \mathcal{D} \to \mathcal{S}\) be the function which maps a completely regular diffusion on natural scale to its speed measure. 
Corollary \ref{coro: main1} gives us the following result, which we call a theorem rather than a corollary, since we think it deserves this name.
\begin{theorem} \label{theo: main homeo}
The map	\(\Phi\) is a homeomorphism, i.e. \(\Phi\) is a continuous bijection with continuous inverse \(\Phi^{-1}\). 
\end{theorem}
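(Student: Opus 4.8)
The plan is to verify the three defining properties of a homeomorphism in turn: that $\Phi$ is a bijection, that $\Phi$ is continuous, and that $\Phi^{-1}$ is continuous. The key structural observation is that both spaces are metrizable: $\mathcal{S}$ carries the vague topology, and $\mathcal{D}$ sits inside $C(J, \M)$ with the local uniform topology (this embedding being exactly the content of Corollary \ref{coro: cont}). Metrizable spaces are first countable, hence sequential, so continuity of each map may be checked on sequences. This is precisely the reason for the care taken in Remark \ref{rem: product topology}, and it reduces both continuity statements to the equivalences already packaged in Corollary \ref{coro: main1}.

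First I would establish that $\Phi$ is a well-defined bijection. Well-definedness and surjectivity follow from the correspondence recalled in Section \ref{sec: basic setting}: the speed measure of a completely regular diffusion is locally finite, so $\Phi$ does map $\mathcal{D}$ into $\mathcal{S}$, and conversely every locally finite speed measure is realized by a regular diffusion on natural scale, which is completely regular precisely because its speed measure is locally finite, so every element of $\mathcal{S}$ lies in the image. Injectivity is exactly the uniqueness statement that within the class of regular diffusions the speed measure determines the diffusion (\cite[Corollary 16.73]{breiman1968probability}).

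Next I would prove continuity of $\Phi^{-1}$. Take a sequence $\m^n \to \m^0$ vaguely in $\mathcal{S}$; since all measures involved are locally finite, the Remark following Definition \ref{def: LT conv} identifies this with $\m^n \Rightarrow \m^0$ in the speed measure sense. The implication (i)$\Rightarrow$(ii) of Corollary \ref{coro: main1} then gives that the associated diffusions $(x \mapsto P^n_x)$ converge to $(x \mapsto P^0_x)$ locally uniformly, which is exactly convergence of $\Phi^{-1}(\m^n)$ to $\Phi^{-1}(\m^0)$ in $\mathcal{D}$; by sequentiality this yields continuity of $\Phi^{-1}$. For the continuity of $\Phi$ I would run the same chain in reverse: a sequence converging in $\mathcal{D}$ is by definition a locally uniformly convergent sequence of diffusions, so (ii)$\Rightarrow$(i) of Corollary \ref{coro: main1} gives $\m^n \Rightarrow \m^0$, and the Remark again turns this into vague convergence of the speed measures, i.e.\ convergence in $\mathcal{S}$.

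Because the substance of both continuity directions is carried entirely by the equivalence of (i)--(iii) in Corollary \ref{coro: main1}, I do not expect a genuine analytic obstacle: the statement is essentially a repackaging of that corollary together with the bijection. The only points requiring attention are bookkeeping ones, namely confirming that $\Phi$ is genuinely a bijection between $\mathcal{D}$ and $\mathcal{S}$ (in particular that local finiteness of the speed measure matches complete regularity of the diffusion, so that the two sets really correspond), and confirming that both topologies are metrizable so that the sequential criterion for continuity is legitimate --- the latter being the role of Remark \ref{rem: product topology} and the reason $C(J, \M)$ is used in place of $\M^J$.
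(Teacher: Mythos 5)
Your proposal is correct and follows exactly the route the paper intends: the paper derives Theorem \ref{theo: main homeo} directly from Corollary \ref{coro: main1} together with the bijectivity coming from the existence/uniqueness correspondence between locally finite speed measures and completely regular diffusions, with metrizability of both spaces justifying the sequential criterion. Your write-up simply makes these steps explicit.
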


	Related to Remark \ref{rem: product topology}, Corollary \ref{coro: main1} also shows the following: 
	\begin{corollary}
	In case \(\mathcal{D}\) is seen as a subspace of \(\M^J\) endowed with the product weak topology, then \(\Phi\) is a sequential homeomorphism, i.e. a sequentially continuous bijection with sequentially continuous inverse.\footnote{Of course, the inverse \(\Phi^{-1}\) is even continuous, as \(\mathcal{S}\) is sequential.} 
\end{corollary}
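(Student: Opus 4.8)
The plan is to reduce the statement entirely to the equivalence (i)~$\Leftrightarrow$~(iii) of Corollary~\ref{coro: main1}, using that sequential convergence in a product topology is nothing but coordinatewise convergence. First I would observe that the bijectivity of $\Phi$ is already contained in Theorem~\ref{theo: main homeo} and is a purely set-theoretic property, insensitive to the topology placed on $\mathcal{D}$; hence only the sequential continuity of $\Phi$ and of $\Phi^{-1}$ remain to be verified.

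Next I would pin down what sequential convergence in the product weak topology on $\M^J$ means. For an arbitrary product of topological spaces a net — and in particular a sequence — converges if and only if it converges in each coordinate. Thus $(x \mapsto P^n_x) \to (x \mapsto P^0_x)$ in $\M^J$ holds exactly when $P^n_x \to P^0_x$ weakly for every $x \in J$, which is precisely condition~(iii). On the target side, since $\mathcal{D}$ consists of completely regular diffusions, the associated speed measures are locally finite, and by the Remark following Definition~\ref{def: LT conv} the speed-measure convergence $\m^n \Rightarrow \m^0$ coincides with vague convergence $\m^n \to \m^0$, i.e. with condition~(i).

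With these two translations in hand both directions become immediate. For sequential continuity of $\Phi$, I would assume $(x \mapsto P^n_x) \to (x \mapsto P^0_x)$ in the product weak topology, read this as (iii), and invoke Corollary~\ref{coro: main1} to obtain (i), i.e. $\Phi(x \mapsto P^n_x) = \m^n \to \m^0 = \Phi(x \mapsto P^0_x)$ vaguely in $\mathcal{S}$. Conversely, if $\m^n \to \m^0$ vaguely, then (i) holds, Corollary~\ref{coro: main1} yields (iii), and therefore $\Phi^{-1}(\m^n) = (x \mapsto P^n_x) \to (x \mapsto P^0_x) = \Phi^{-1}(\m^0)$ in the product weak topology, which is the sequential continuity of $\Phi^{-1}$. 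Finally, to justify the footnote I would note that $\mathcal{S}$ carries the vague topology, which is metrizable and hence sequential, so the sequential continuity of $\Phi^{-1}$ automatically upgrades to genuine continuity; no such upgrade is available for $\Phi$, because $\M^J$ need not be sequential (Remark~\ref{rem: product topology}), and this asymmetry is exactly what forces the statement into sequential rather than full topological form.

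As for obstacles, there is essentially no hard analytic step left: all of the substantive work has already been packaged into Corollary~\ref{coro: main1}. The only points demanding care are conceptual rather than computational — being explicit that coordinatewise (pointwise) weak convergence is the correct reading of sequential convergence in the product topology, and tracking the domain/codomain asymmetry that explains both the phrasing ``sequential homeomorphism'' and the footnote's remark on $\Phi^{-1}$.
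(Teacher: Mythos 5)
Your proposal is correct and matches the paper exactly: the paper states this corollary as an immediate consequence of Corollary \ref{coro: main1}, relying on precisely the two translations you make (sequential convergence in \(\M^J\) is pointwise weak convergence, i.e.\ condition (iii), and vague convergence of the locally finite speed measures is condition (i)). Your justification of the footnote via metrizability of \(\mathcal{S}\) also agrees with the paper's remark.
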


In the remainder of this section we apply Theorem \ref{theo: main homeo} to study properties of certain subsets of \(\mathcal{D}\). More precisely, we consider
the set of completely regular diffusions with the Feller--Dynkin property and the set of It\^o diffusions with open state space. \\

\noindent
\emph{On the set of diffusions with the Feller--Dynkin property.}
	Let \(C_0(J)\) be the set of all continuous functions \(J \to \mathbb{R}\) which are vanishing at infinity.
	We say that a diffusion \((x \mapsto P_x)\) has the \emph{Feller--Dynkin property} if \((x \mapsto E_x [f (\X_t)]) \in C_0(J)\) for all \(f \in C_0(J)\) and \(t > 0\). Let \(\mathcal{O}\) be the set of all completely regular diffusions with the Feller--Dynkin property.
	\begin{corollary} \label{prop: FD set}
	If \(J\) is bounded, then \(\mathcal{O} = \mathcal{D}\) and, in particular, \(\mathcal{O}\) is clopen in \(\mathcal{D}\). Conversely, if \(J\) is unbounded, then \(\mathcal{O}\) is a dense Borel subset of \(\mathcal{D}\) and it is neither closed nor open in \(\mathcal{D}\).
	\end{corollary}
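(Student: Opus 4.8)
The plan is to transport the statement to the speed-measure side via the homeomorphism $\Phi$ of Theorem~\ref{theo: main homeo} and to identify the Feller--Dynkin property with a boundary condition that is visible on $\m$. The analytic input I would isolate first---and which I expect to be the main obstacle---is the dichotomy that a completely regular diffusion on natural scale lies in $\mathcal{O}$ if and only if every non-included (open) end of $J$ is a \emph{natural} boundary. One direction is soft: if an open end, say $r=\sup J$, is natural, then for $f\in C_0(J)$ and a suitable $a'\in J^\circ$ separating $r$ from the bulk of $f$ one has $|E_x[f(\X_t)]|\le\varepsilon+\|f\|_\infty\,P_x(\gamma_{a'}\le t)$, and $P_x(\gamma_{a'}\le t)\to 0$ as $x\to r$ is precisely the defining feature of a natural boundary, so $P_tf\in C_0(J)$. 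The converse is the delicate part: at an \emph{entrance} end $r$ the laws $P_x$ converge weakly, as $x\to r$, to a nondegenerate entrance law under which $\X_t$ has, for every $t>0$, a distribution supported in the interior $J^\circ$; hence $E_x[f(\X_t)]$ tends to a strictly positive limit for a suitable $0\le f\in C_0(J)$, and the Feller--Dynkin property fails. This is the classical correspondence between entrance boundaries and failure of the $C_0$-property, and I would invoke the boundary classification of \cite{breiman1968probability}. On natural scale the dichotomy becomes explicit: the unbounded end $r=+\infty$ is natural precisely when $\int_a^\infty(y-a)\,\m(dy)=\infty$ for one (equivalently every) $a\in J^\circ$, and is entrance when this integral is finite; the case $r=-\infty$ is symmetric.

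The bounded case then follows at once. If $J$ is bounded, every open end is a \emph{finite} inaccessible boundary (the paths lie in $J$, so such an end is never reached), and on natural scale such an end cannot be entrance: finiteness of $\int_a^r(y-a)\,\m(dy)$ would force $\m((a,r))<\infty$ and hence $\int_a^r(r-y)\,\m(dy)<\infty$, i.e.\ accessibility, a contradiction. Thus every open end is natural, while the closed ends are reflecting and impose no vanishing condition; in either case $P_t$ maps $C_0(J)$ into itself, so $\mathcal{O}=\mathcal{D}$ and $\mathcal{O}$ is trivially clopen in $\mathcal{D}$.

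For unbounded $J$ I pass through $\Phi$. As $\Phi$ is a homeomorphism, $\mathcal{O}=\Phi^{-1}(\mathcal{S}_{\mathrm{FD}})$ is dense, Borel, closed, or open in $\mathcal{D}$ exactly when $\mathcal{S}_{\mathrm{FD}}:=\{\m\in\mathcal{S}:\m\text{ is natural at every unbounded end}\}$ is so in $(\mathcal{S},\text{vague})$, so it suffices to analyse $\mathcal{S}_{\mathrm{FD}}$. For the Borel property, fix $a\in J^\circ$ and pick $g_b\in C_c(J)$ increasing pointwise to $(y-a)\,\1_{[a,\infty)}$; then each $\m\mapsto\int g_b\,d\m$ is vaguely continuous and, by monotone convergence, $F_+(\m):=\sup_b\int g_b\,d\m=\int_a^\infty(y-a)\,\m(dy)$, so $F_+$ is lower semicontinuous and $\{F_+=\infty\}=\bigcap_N\bigcup_b\{\int g_b\,d\m>N\}$ is Borel; intersecting with the analogous set at the $-\infty$ end, when present, exhibits $\mathcal{S}_{\mathrm{FD}}$ as a Borel subset of $\mathcal{S}$.

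The three remaining assertions all rest on one observation: vague convergence is insensitive to the measures' tails at the infinite ends, whereas membership in $\mathcal{S}_{\mathrm{FD}}$ is determined \emph{solely} by those tails. Given any $\m\in\mathcal{S}$, set $\m^n:=\m+2\,\1_{\{|y|>n\}}\,dy$; each $\m^n$ lies in $\mathcal{S}$, has a Lebesgue (hence natural) tail at every unbounded end, so $\m^n\in\mathcal{S}_{\mathrm{FD}}$, while $\m^n\to\m$ vaguely since the two measures agree on any fixed compact set once $n$ is large. This proves that $\mathcal{S}_{\mathrm{FD}}$ is dense; applying it to an $\m$ with a light tail at $+\infty$ (e.g.\ $(1+y)^{-3}\,dy$ there, together with infinite mass near any finite open end so that $\m\in\mathcal{S}$), which is entrance and hence outside $\mathcal{S}_{\mathrm{FD}}$, yields a sequence in $\mathcal{S}_{\mathrm{FD}}$ whose limit lies outside it, so $\mathcal{S}_{\mathrm{FD}}$ is not closed. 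For non-openness I reverse the construction: starting from an $\m\in\mathcal{S}_{\mathrm{FD}}$ with Lebesgue tails and replacing those tails beyond level $n$ by the integrable-first-moment tail $(1+|y|)^{-3}\,dy$ produces measures $\m^n\in\mathcal{S}\setminus\mathcal{S}_{\mathrm{FD}}$, now entrance, with $\m^n\to\m$ vaguely, so $\mathcal{S}_{\mathrm{FD}}$ is not open. Transporting these four properties back through $\Phi$ gives the corollary; as noted, the only nontrivial ingredient beyond the tail-blindness of the vague topology is the boundary dichotomy characterising $\mathcal{O}$.
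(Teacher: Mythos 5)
Your argument is correct and follows essentially the same route as the paper: identify \(\Phi(\mathcal{O})\) with the set of speed measures whose infinite ends carry an infinite first moment (natural boundaries), and then transport Borel-ness, density, non-closedness and non-openness through the homeomorphism \(\Phi\) via tail perturbations that the vague topology cannot detect. The only substantive difference is that the paper obtains the boundary characterization of the Feller--Dynkin property by citing \cite[Theorem 1.1]{criens21} rather than re-deriving it, so the entrance-boundary dichotomy you sketch --- the one genuinely delicate analytic step in your write-up --- is available off the shelf and need not be proved.
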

	\begin{proof}
	According to \cite[Theorem 1.1]{criens21}, a regular diffusion with speed measure \(\m\) has the Feller--Dynkin property if and only if any infinite boundary point of \(J\) is natural,~i.e.
	\begin{align*}
	\begin{cases} \displaystyle 	\int^{\infty} x \m (dx) = \infty,&\text{ if \(\infty\) is a boundary point},\vspace{0.2cm}\\
\displaystyle \	\int_{-\infty} |x| \m (dx) = \infty,&\text{ if \(- \infty\) is a boundary point}.
	\end{cases}
	\end{align*}
	Now, if \(J\) is bounded it is clear that \(\mathcal{O} = \mathcal{D}\). 
	Suppose that \(J\) is unbounded. It is not hard to see that \(\Phi (\mathcal{O})\) is Borel but neither closed nor open.\footnote{For the latter, note e.g. that \( \frac{e^{|x| / n} dx}{|x|^3 \vee 1} \to \frac{dx}{|x|^3 \vee 1}\) and that \(e^{- |x| / n} dx \to dx\).}
	Hence, by Theorem \ref{theo: main homeo}, the same is true for \(\mathcal{O}\). Finally, the claim that \(\mathcal{O}\) is dense in \(\mathcal{D}\) follows from Theorem \ref{theo: main homeo} and the fact that any locally finite measure can be approximated in the vague topology by a sequence of discrete measures (\cite[Theorem 30.4]{Bauer}). To be more precise, let \(\m^0\in \mathcal{S}, \m \in \mathcal{O}\) and let \(\n^1, \n^2, \dots\) be a sequence of discrete measures such that \(\n^n \to \m^0\) vaguely. Then, \(\m^n (dx) \triangleq \n^n(dx) + \frac{1}{n} \m (dx)\) is the speed measure of a diffusion from \(\mathcal{O}\) and \(\m^n \to \m^0\) vaguely. This shows that \(\Phi(\mathcal{O})\) is dense in \(\mathcal{S}\) and hence, by Theorem \ref{theo: main homeo}, \(\mathcal{O}\) is dense in~\(\mathcal{D}\). The proof is complete.
	\end{proof}

\noindent
\emph{On the set of It\^o diffusions.}
 Let us assume that \(J = (l, r)\) is open.
We call a completely regular diffusion an \emph{It\^o diffusion} if its speed measure \(\m\) is absolutely continuous w.r.t. the Lebesgue measure, i.e. \(\m (dx) = f (x) dx\) for some \(f \in L_\textup{loc}^1 (J)\). Denote the set of It\^o diffusions by \(\mathcal{I}\).
\begin{corollary} \label{prop: ito diff}
	\(\mathcal{I}\) is a dense Borel subset of \(\mathcal{D}\) and it is neither closed nor open in~\(\mathcal{D}\). 
\end{corollary}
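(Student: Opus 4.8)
The plan is to transfer everything through the homeomorphism $\Phi$ of \Cref{theo: main homeo}. Since $\Phi\colon\mathcal D\to\mathcal S$ is a homeomorphism and the properties ``dense'', ``Borel'', ``closed'' and ``open'' are preserved under homeomorphisms, it suffices to prove that the image
\[
\Phi(\mathcal I)=\{\m\in\mathcal S:\m\ll\lambda\}
\]
(with $\lambda$ the Lebesgue measure on $J$) is a dense Borel subset of $\mathcal S$ that is neither closed nor open, where $\mathcal S$ carries the vague topology. I would establish the three assertions separately. Of these, proving that $\Phi(\mathcal I)$ is Borel is the real obstacle; density and the failure of closedness/openness are soft and can be settled by explicit sequences.

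For the Borel property I would first exhaust $J$ by relatively compact open sets $U_1\subset U_2\subset\cdots$ with $\overline{U_k}\subset U_{k+1}$ and $\bigcup_k U_k=J$. Since $\m$ is locally finite, each restriction $\m|_{U_k}$ is a finite measure, and $\m\ll\lambda$ holds iff $\m|_{U_k}\ll\lambda$ for every $k$; hence it is enough to show that $\{\m\in\mathcal S:\m|_{U_k}\ll\lambda\}$ is Borel for fixed $k$. Here I would invoke the classical $\varepsilon$--$\delta$ criterion for absolute continuity of finite measures: $\m|_{U_k}\ll\lambda$ iff for every $p\in\mathbb N$ there is $q\in\mathbb N$ with $\m(A)\le 1/p$ for all Borel $A\subset U_k$ satisfying $\lambda(A)<1/q$. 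By outer regularity it suffices to test this on the countable family $\mathcal A_k$ of finite unions of open intervals with rational endpoints contained in $U_k$, which turns the criterion into
\[
\{\m\in\mathcal S:\m|_{U_k}\ll\lambda\}=\bigcap_{p\in\mathbb N}\ \bigcup_{q\in\mathbb N}\ \bigcap_{\substack{A\in\mathcal A_k\\ \lambda(A)<1/q}}\{\m\in\mathcal S:\m(A)\le 1/p\}.
\]
The key point is then that, for fixed open $A$, the evaluation $\m\mapsto\m(A)$ is lower semicontinuous for the vague topology, as it equals $\sup\{\int f\,d\m: f\in C_c(A),\,0\le f\le 1\}$, a supremum of vaguely continuous maps; hence each set $\{\m:\m(A)\le 1/p\}$ is closed. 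The displayed set is therefore a countable Boolean combination of closed sets, so Borel, and consequently so is $\Phi(\mathcal I)=\bigcap_k\{\m:\m|_{U_k}\ll\lambda\}$.

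For density I would use mollification. Fix $\m\in\mathcal S$ and smooth probability kernels $\psi_n\ge 0$ supported in $[-1/n,1/n]$, and set $g_n(x)\triangleq\int\psi_n(x-y)\,\m(dy)$ for $x\in J$. Because $\m$ is locally finite, $g_n$ is locally bounded, hence $g_n\in L^1_{\mathrm{loc}}(J)$; because $\m$ has full support, $g_n>0$ everywhere, so $g_n(x)\,dx$ is again a speed measure lying in $\Phi(\mathcal I)$. A routine computation gives $\int f\,g_n\,dx=\int(f*\widetilde{\psi}_n)\,d\m\to\int f\,d\m$ for every $f\in C_c(J)$, i.e.\ $g_n(x)\,dx\to\m$ vaguely, which proves that $\Phi(\mathcal I)$ is dense in $\mathcal S$.

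Finally, for the topological defects I would exhibit explicit sequences, fixing a point $x_0\in J$. To see that $\Phi(\mathcal I)$ is not open, the measures $\lambda+\tfrac1n\delta_{x_0}$ lie in $\mathcal S\setminus\Phi(\mathcal I)$ (they carry an atom, hence are not absolutely continuous) but converge vaguely to $\lambda\in\Phi(\mathcal I)$; all of them have full support and are thus genuine speed measures. To see that $\Phi(\mathcal I)$ is not closed, the absolutely continuous speed measures $\lambda+\tfrac n2\,\1_{[x_0-1/n,\,x_0+1/n]}(x)\,dx\in\Phi(\mathcal I)$ converge vaguely to $\lambda+\delta_{x_0}\in\mathcal S\setminus\Phi(\mathcal I)$. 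Transporting all four conclusions back through $\Phi$ then yields the claim for $\mathcal I\subset\mathcal D$.
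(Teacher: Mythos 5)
Your overall strategy---transporting all four assertions through the homeomorphism \(\Phi\) of \Cref{theo: main homeo}---is the same as the paper's. Where you genuinely diverge is the Borel step: the paper parametrizes \(\Phi(\mathcal I)\) as the injective continuous image of a Borel set of densities \(S_+(J)\subset L^1_{\mathrm{loc}}(J)\) and invokes the Lusin--Souslin theorem, whereas you write \(\{\m:\m\ll\lambda\}\) directly as a countable Boolean combination of vaguely closed sets via the \(\varepsilon\)--\(\delta\) criterion for finite measures. That argument is correct (the reduction to rational finite unions of intervals goes through by outer regularity of \(\lambda\) together with inner approximation of open sets, and \(\m\mapsto\m(A)\) is indeed vaguely lower semicontinuous for open \(A\)), it is more elementary, and it even pins down an explicit Borel class for \(\Phi(\mathcal I)\).

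The gap lies in the density and the non-closed/non-open steps, and it comes from ignoring the boundary constraints that membership in \(\mathcal S\) imposes when \(J=(l,r)\) has a finite endpoint. In that case every \(\m\in\mathcal S\) must render the endpoint inaccessible, i.e. \(\int_l(x-l)\,\m(dx)=\infty\) (this is exactly the condition built into the paper's set \(S_+(J)\)); since \(x-l\le\epsilon\) on \((l,l+\epsilon)\), this forces \(\m((l,l+\epsilon))=\infty\) for \emph{every} \(\epsilon>0\). Consequently your mollification \(g_n(x)=\int\psi_n(x-y)\,\m(dy)\) equals \(+\infty\) on a right-neighbourhood of \(l\): the assertion ``\(\m\) locally finite \(\Rightarrow g_n\) locally bounded'' fails there because the window \((x-1/n,x+1/n)\cap J\) is not relatively compact in \(J\) for \(x\) within \(1/n\) of \(l\). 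So \(g_n\,dx\) is not even locally finite and does not lie in \(\Phi(\mathcal I)\). For the same reason \(\lambda\), \(\lambda+\tfrac1n\delta_{x_0}\) and \(\lambda+\tfrac n2\1_{[x_0-1/n,x_0+1/n]}\,dx\) satisfy \(\int_l(x-l)\,\lambda(dx)<\infty\) and hence are not speed measures in \(\mathcal S\) at all, so your two counterexample sequences live outside the space they are supposed to probe. Both defects are repairable: fix \(\m_*\in\Phi(\mathcal I)\) with density \((x-l)^{-2}+(r-x)^{-2}+1\) (dropping the terms corresponding to infinite endpoints), mollify only \(\m|_{(l+1/n,\,r-1/n)}\) at a scale \(o(1/n)\) and add \(\tfrac1n\m_*\) to restore full support and the boundary conditions, and base the non-open/non-closed sequences at \(\m_*\) instead of \(\lambda\). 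As written, however, your argument for these three assertions is only valid for \(J=\mathbb R\).
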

The non-closedness of the set of real-valued It\^o diffusions with drift was already observed in \cite{rosenkrantz}. Corollary \ref{prop: ito diff} provides a refined picture for the set of It\^o diffusions without drift.
\begin{proof}[Proof of Corollary \ref{prop: ito diff}]
	Let \(\mathcal{A}\) be the set of speed measures which are absolutely continuous w.r.t. the Lebesgue measure and let \(\mathcal{R}\) be the Polish space of locally finite measures on \((J, \mathcal{B}(J))\) with the vague topology.
	Furthermore, let \(S_+ (J)\) be the set of all \(f \in L^1_\textup{loc} (J)\) such that 
	\(
	\int_a^b f (x) dx > 0
	\)
	for all \(a, b \in J\) with \(a < b\) and 
	\[
		\begin{cases} \displaystyle \int^r |r - x| f(x) dx = \infty,&\text{ if \(r < \infty\)},\vspace{0.2cm}\\
	\displaystyle \ \int_l \ |l - x| f(x) dx = \infty,&\text{ if \(l > - \infty\)}.
	\end{cases}
	\]
	We endow \(L_\textup{loc}^1(J)\) with the local \(L^1\) topology which renders it into a Polish space. It is not hard to see that \(S_+ (J) \in \mathcal{B}(L_\textup{loc}^1(J))\).
		Now, consider the map \(\psi \colon S_+ (J) \to \mathcal{R}\) defined by \(\psi (f) (G) = \int_G f (x)dx\) for \(G \in \mathcal{B}(J)\). As \(\psi\) is a continuous injection from a Borel subset of a Polish space into a Polish space, \cite[Theorem~8.2.7]{cohn} yields that \(\psi (S_+ (J)) \in \mathcal{B}(\mathcal{R})\). As \(\psi (S_+ (J)) = \mathcal{A}\) by \cite[Proposition~16.43, Theorem 16.56]{breiman1968probability}, \(\mathcal{A}\) is a Borel subset of \(\mathcal{S}\).
		It is not hard to see that \(\mathcal{A}\) is neither closed nor open.\footnote{For instance, note that \(dx + ne^{-nx} \1_{\{x \geq 0\}} dx \to dx + \delta_0 (dx)\) and that  \(dx + \frac{1}{n} \delta_{x_0} (dx) \to dx\).} We conclude from Theorem~\ref{theo: main homeo} that \(\mathcal{I}\) has the same properties, i.e. it is a Borel set but neither closed nor open. Finally, let us explain that \(\mathcal{I}\) is dense in \(\mathcal{D}\). By Theorem \ref{theo: main homeo}, it suffices to show that \(\mathcal{A}\) is dense in~\(\mathcal{S}\). It is clear that any discrete measure can be approximated in the vague topology by a sequence of absolutely continuous measures.\footnote{For instance, recall that \(N(\mu, \sigma^2) \to \delta_\mu\) vaguely for \(\sigma^2 \to 0\), where \(N(\mu,\sigma^2)\) denotes the normal distribution with expectation \(\mu\) and variance \(\sigma^2\).} Thus, as the set of discrete measures is dense in \(\mathcal{R}\), it follows that \(\mathcal{A}\) is dense in \(\mathcal{S}\). The proof is complete.
\end{proof}

The remainder of this paper is devoted to the proof of Theorem \ref{theo: new main1}.

\section{Proof of Theorem \ref{theo: new main1}} \label{sec: pf}
In this section we assume that \(P^n_x \to P^0_x\) for all \(x \in J\). Our goal is to show that \(\m^n \Rightarrow \m^0\).
The proof for this is split into two parts. In the first we establish property (a) from Definition \ref{def: LT conv} and in the second we deal with the properties (b) and (c).

\subsection{Proof for convergence on the interior} \label{sec: pf a}
In this section we prove property (a) from Definition \ref{def: LT conv}.
Recall \cite[Corollary VII.3.8]{RY}:
For each \(n \in \mathbb{Z}_+\), all \(a < b\) such that \([a, b] \subset J^\circ\) and every \(f \in C_c (J)\) the following holds: 
\begin{align} \label{eq: def SM}
E_x^n \Big[ \int_0^{\gamma_a  \wedge \gamma_b  } f (\X_s) ds \Big] = \int G_{(a, b)} (x, y) f (y) \m^n (dy), \quad a < x < b, 
\end{align}
where \(G_{(a, b)}\) denotes the Green function as given in \cite[Eq. 16.35]{breiman1968probability}. 
Recall our notation \(J^\circ = (l, r)\).
\begin{lemma} \label{lem: key1}
	Suppose that \(P^n_{x_0} \to P^0_{x_0}\) weakly for some \(x_0 \in J^\circ\). Then, there exist two sets \(A \subset (l, x_0)\) and \(B \subset (x_0, r)\) with countable complements (in \((l, x_0)\) and \((x_0, r)\), respectively) such that for all \(a \in A, b \in B\) and~\(f \in C_c (J^\circ)\)
	\begin{align} \label{eq: conv to show}
	E_{x_0}^n \Big[ \int_0^{\gamma_{a}  \wedge \gamma_b } f (\X_s) ds \Big] \to E^0_{x_0} \Big[ \int_0^{\gamma_{a} \wedge \gamma_b} f (\X_s) ds \Big]
	\end{align}
	as \(n \to \infty\).
\end{lemma}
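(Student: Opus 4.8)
The plan is to deduce \eqref{eq: conv to show} from the continuous mapping theorem combined with a uniform integrability estimate. Write \(\tau \triangleq \gamma_a \wedge \gamma_b\) for the exit time of \(\X\) from \((a,b)\) and \(\Psi (\omega) \triangleq \int_0^{\tau(\omega)} f(\omega_s)\,ds\). By the hypothesis of the lemma we have \(P^n_{x_0} \to P^0_{x_0}\) weakly, so it suffices to show two things. First, that \(\Psi\) is continuous at \(P^0_{x_0}\)-almost every \(\omega\); then the continuous mapping theorem yields that \(\Psi\) under \(P^n_{x_0}\) converges in distribution to \(\Psi\) under \(P^0_{x_0}\). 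Second, that the laws of \(\Psi\) under \(P^n_{x_0}\), \(n \in \mathbb{N}\), are uniformly integrable; this upgrades distributional convergence to convergence of the expectations, which is exactly \eqref{eq: conv to show}.

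For the first point I would identify the exceptional levels and thereby define \(A\) and \(B\). Note that \(\Psi\) is continuous at every \(\omega\) at which \(\tau\) is finite and continuous: if \(\omega^k \to \omega\) locally uniformly with \(\tau(\omega^k) \to \tau(\omega) < \infty\), then \(f(\omega^k_\cdot) \to f(\omega_\cdot)\) uniformly on \([0,\tau(\omega)]\), and since \(f\) is bounded, \(\Psi(\omega^k) \to \Psi(\omega)\). The map \(\tau\) is always lower semicontinuous, and it is continuous precisely at those \(\omega\) that leave \([a,b]\) immediately after \(\tau\), i.e. at which \(\tau(\omega) = \inf\{s \colon \omega_s \notin [a,b]\}\). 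On natural scale \(\tau < \infty\) holds \(P^0_{x_0}\)-a.s., and \(E^0_{x_0}[\tau] = \int G_{(a,b)}(x_0,y)\,\m^0(dy) < \infty\) by the defining relation of the speed measure, since \(\m^0\) is locally finite on \(J^\circ\). Hence it remains to arrange that \(\X\) crosses its exit point immediately. Writing \(\rho_a \triangleq \inf\{s \colon \X_s < a\}\) (and symmetrically for \(b\)) and applying the strong Markov property at \(\tau\), the bad event \(\{\tau < \inf\{s \colon \X_s \notin [a,b]\}\}\) is controlled by \(P^0_a(\rho_a > 0)\) and its analogue at \(b\). By Blumenthal's zero--one law each such probability lies in \(\{0,1\}\), and it equals \(1\) only at the at most countably many atoms of \(\m^0\). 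Removing these countably many points from \((l,x_0)\) and \((x_0,r)\) defines \(A\) and \(B\); then for all \(a \in A\), \(b \in B\) the functional \(\Psi\) is \(P^0_{x_0}\)-a.s. continuous, and the continuous mapping theorem applies.

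The main obstacle is the uniform integrability. Since \(0 \le \Psi \le \|f\|_\infty\, \tau\), it suffices to prove the \emph{uniform second moment bound} \(\sup_n E^n_{x_0}[\tau^2] < \infty\); via the defining relation \(E^n_{x}[\tau] = \int G_{(a,b)}(x,y)\,\m^n(dy)\) this is essentially equivalent to the vague relative compactness of \(\{\m^n\}\) on \((a,b)\), the point where the argument of \cite{brooks} proceeds by contradiction through the time change. I expect this to be the analytic heart: weak convergence of the marginal laws alone only gives the easy inequality \(\liminf_n E^n_{x_0}[\tau] \ge E^0_{x_0}[\tau]\) (from the lower semicontinuity of \(\tau\) and the portmanteau theorem), while the reverse, quantitative control has to come from the natural-scale structure. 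The route I would take is the strong Markov method: first establish a uniform one-step escape estimate, namely \(t_0 > 0\) and \(p > 0\) with \(\inf_n P^n_x(\tau \le t_0) \ge p\) for all \(x \in [a,b]\); iterating it along \(t_0, 2t_0, \dots\) with the strong Markov property gives the geometric tail \(P^n_x(\tau > k t_0) \le (1-p)^k\) uniformly in \(n\), whence \(\sup_n E^n_{x_0}[\tau^2] < \infty\) and the family \(\{\Psi \text{ under } P^n_{x_0}\}_n\) is uniformly integrable, completing \eqref{eq: conv to show}. For each fixed \(x\) the escape estimate follows from \(P^n_x \to P^0_x\) weakly and the \(P^0_x\)-a.s. continuity of \(\tau\) (pick \(t_0\) a continuity point of the law of \(\tau\) under \(P^0_x\) and enlarge it to absorb the finitely many small \(n\)); the genuinely delicate step is to make the estimate uniform over \(x \in [a,b]\) as well, and here I expect one must exploit the universal, \(n\)-independent exit behaviour of diffusions on natural scale, \(P^n_x(\gamma_b < \gamma_a) = (x-a)/(b-a)\), to compare different starting points.
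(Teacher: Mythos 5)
Your overall architecture --- continuous mapping theorem for the functional \(\Psi\) plus uniform integrability via a uniform second moment bound for the exit time, obtained from a one-step escape estimate iterated through the strong Markov property --- is exactly the paper's. But the step you yourself flag as ``genuinely delicate'', namely the escape estimate
\[
\inf_{n}\inf_{x\in[a,b]}P^n_x(\tau\le t_0)\ge p>0,
\]
is left unproven, and the tool you propose for it does not work: the natural-scale identity \(P^n_x(\gamma_b<\gamma_a)=(x-a)/(b-a)\) carries no information about \emph{when} the exit happens, only about which side, so it cannot by itself convert an estimate at \(x_0\) into one at \(x\). The paper's mechanism (Lemma \ref{lem: help1}) is purely pathwise: started from \(x_0\), the process must pass through every \(x\in[a,x_0]\) before it can go strictly below \(a\), so the strong Markov property gives \(P^n_{x_0}(\sigma^-_a<t)\le P^n_x(\tau^-_a\wedge\tau^+_b\le t)\), and symmetrically on the other side; this reduces the whole supremum over \(x\in[a,b]\) to the two quantities \(P^n_{x_0}(\sigma^{\pm}\ge t)\) at the single point \(x_0\). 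That reduction also repairs a second defect in your sketch: you propose to get the fixed-\(x\) estimate from \(P^n_x\to P^0_x\) weakly, but the lemma's hypothesis only provides weak convergence at the one point \(x_0\), so weak convergence at other starting points is not available. (The uniformity in \(n\) at \(x_0\) is then obtained in the paper from upper semicontinuity of \(P\mapsto P(\sigma^{\pm}\ge t)\), compactness of the convergent sequence \(\{P^n_{x_0}\}\), and the Bruggeman--Ruf result that diffusions hit points arbitrarily fast with positive probability; your ``continuity point of the law of \(\tau\)'' device is a workable substitute once everything has been reduced to \(x_0\).)

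A smaller issue concerns your identification of the exceptional levels. It is not true that \(P^0_a(\sigma^-_a>0)=1\) exactly at the atoms of \(\m^0\): at a sticky point the diffusion still leaves \([a,r)\) immediately (its occupation of \(\{a\}\) has positive Lebesgue measure, but the level set has empty interior), and for a regular diffusion no interior point is one-sided, so the Blumenthal argument as stated does not deliver the countability you need. The paper instead defines \(A\) and \(B\) as the levels at which \(\tau^{\mp}\) is \(P^0_{x_0}\)-a.s.\ continuous and obtains countability of the complements softly, from monotonicity in the level (\(\sigma^+_b=\tau^+_{b+}\) and \(\{b:P^0_{x_0}(\tau^+_b\ne\tau^+_{b+})>0\}\) is at most countable by the standard Ethier--Kurtz lemma), combined with the sandwich between the lower semicontinuous \(\tau^+_b\) and the upper semicontinuous \(\sigma^+_b\). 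Your semicontinuity discussion of \(\Psi\) itself is fine.
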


The test functions \(G_{(a, b)} (x, \cdot) f\) are sufficient to characterize vague convergence of locally finite measures on \(J^\circ\).
Thus, by virtue of the r.h.s. in \eqref{eq: def SM}, Lemma \ref{lem: key1} implies  \(\m^n |_{J^\circ} \to \m^0|_{J^\circ}\), i.e. part (a) from Definition \ref{def: LT conv}.
\qed \\

\noindent
\emph{Proof of Lemma \ref{lem: key1}.} 
For \(y \in J^\circ\) we set
\begin{align*}
\tau_y^+  &\triangleq \inf (s \geq 0 \colon \X_s \geq y), \quad \tau^-_y \triangleq \inf (s \geq 0 \colon \X_s \leq y), \\
\sigma^+_y &\triangleq \inf (s \geq 0 \colon \X_s > y), \quad \sigma^-_y \triangleq \inf (s \geq 0 \colon \X_s < y).
\end{align*}

\begin{lemma} \label{lem: semi cont}
	For any \(y \in J^\circ\) the functions \(\sigma^\pm_y\) are upper semi-continuous and the functions \(\tau^\pm_y\) are lower semi-continuous.
\end{lemma}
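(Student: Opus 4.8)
The plan is to analyze the hitting times directly on the canonical path space $\Omega = C(\mathbb{R}_+, J)$ with the local uniform topology, exploiting the geometry of continuous paths. The key observation is that $\sigma^\pm_y$ and $\tau^\pm_y$ are hitting times of \emph{open} versus \emph{closed} half-lines, and this distinction governs which semicontinuity direction we obtain. For a continuous path $\omega$, the time $\sigma^+_y(\omega) = \inf(s \geq 0 \colon \omega(s) > y)$ is the first entrance into the open set $(y, \infty)$, while $\tau^+_y(\omega) = \inf(s \geq 0 \colon \omega(s) \geq y)$ is the first entrance into the closed set $[y, \infty)$; the heuristic is that entrance times into open sets are upper semicontinuous and entrance times into closed sets are lower semicontinuous.

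First I would treat $\tau^+_y$ (lower semicontinuity). Fix $\omega$ and a sequence $\omega^k \to \omega$ locally uniformly. I want to show $\liminf_k \tau^+_y(\omega^k) \geq \tau^+_y(\omega)$. Set $t < \tau^+_y(\omega)$, so $\omega(s) < y$ for all $s \in [0, t]$; by compactness of $[0,t]$ and continuity there is $\delta > 0$ with $\sup_{s \leq t} \omega(s) \leq y - \delta$. Local uniform convergence then gives $\sup_{s \leq t}\omega^k(s) < y$ for all large $k$, whence $\tau^+_y(\omega^k) \geq t$ for such $k$, and letting $t \uparrow \tau^+_y(\omega)$ yields the claim. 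The argument for $\tau^-_y$ is symmetric, replacing suprema by infima and the condition $\omega(s) > y$.

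Next I would treat $\sigma^+_y$ (upper semicontinuity), which is the more delicate direction. Here I want $\limsup_k \sigma^+_y(\omega^k) \leq \sigma^+_y(\omega)$. Fix $t > \sigma^+_y(\omega)$; by definition of the infimum over the open condition, there exists $s_0 < t$ with $\omega(s_0) > y$, say $\omega(s_0) \geq y + \delta$ for some $\delta > 0$. Local uniform convergence forces $\omega^k(s_0) > y$ for all large $k$, so $\sigma^+_y(\omega^k) \leq s_0 < t$ for such $k$, and letting $t \downarrow \sigma^+_y(\omega)$ gives the result. The case $\sigma^+_y(\omega) = \infty$ is vacuous since then there is nothing above $\sigma^+_y(\omega)$ to witness. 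Again $\sigma^-_y$ follows by the symmetric argument.

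I expect the main obstacle to be purely conceptual rather than technical: one must be careful about the strictness of inequalities in the definitions and about boundary cases (paths that only \emph{touch} level $y$ versus those that \emph{cross} it, and paths for which the relevant hitting time is infinite). The asymmetry between the open condition in $\sigma^\pm_y$ and the closed condition in $\tau^\pm_y$ is exactly what produces opposite semicontinuity, so the proof hinges on not conflating $>$ with $\geq$. No compactness or measure-theoretic input beyond elementary properties of continuous functions on compact intervals is needed; the entire lemma is a deterministic statement about the path space.
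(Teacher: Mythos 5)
Your argument is correct and rests on exactly the same dichotomy as the paper's proof: the strict inequality in $\sigma^\pm_y$ makes the event ``the path has already crossed'' stable under small perturbations (the paper writes $\{\sigma^+_y<t\}$ as a union of open sets), while for $\tau^\pm_y$ the complementary event of staying strictly on one side of $y$ up to time $t$ is open (the paper shows $\{\tau^+_y\le t\}$ is closed via a Lipschitz distance functional, where you instead invoke compactness of $[0,t]$ to get a uniform gap $\delta$). The only difference is presentational --- you verify the sequential definition of semicontinuity directly, which is legitimate since $\Omega$ is metrizable --- so this is essentially the paper's proof.
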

\begin{proof}
	The claim is implied by \cite[Exercise 2.1 on p. 75]{pinsky}. For completeness, we provide a proof for \(\sigma^+_y\) and \(\tau^+_y\). The arguments for \(\sigma^-_y\) and \(\tau^-_y\)  work the same way.
	Take \(t > 0\). We have 
	\[
	\{\sigma^+_y < t\} = \bigcup_{s \in \mathbb{Q}, s < t} \{\X_s > y\}.
	\]
	As \(\omega \mapsto \omega (s)\) is continuous for every \(s \in \mathbb{R}_+\), the set \(\{\X_s> y\}\) is open. As unions of open sets are open, \(\{\sigma^+_y < t\}\) is also open. Consequently, \(\sigma^+_y\) is upper semi-continuous.
	
	Take \(t \in \mathbb{R}_+\) and let \(d_y (x) \triangleq \inf_{z \geq y} |z - x|\) for \(x \in J\). We have 
	\[
	\{\tau^+_y \leq t\} = \Big\{\inf_{s \in \mathbb{Q}, s \leq t} d_y (\X_s) = 0\Big\}.
	\]
	For every \(s \in \mathbb{Q} \cap [0, t]\) and \(\omega, \omega' \in \Omega\) we also have
	\begin{align*}
	\inf_{r \in \mathbb{Q},r \leq t} d_y (\omega (r)) \leq d_y (\omega (s)) 
	&\leq \sup_{r \leq t} | \omega(r) - \omega'(r)| + d_y (\omega' (s)).
	\end{align*}
	Taking the infimum over \(s\) and using symmetry yields that
	\[
	\Big| \inf_{r \in \mathbb{Q},r \leq t} d_y (\omega (r)) - \inf_{r \in \mathbb{Q},r \leq t} d_y (\omega' (r))\Big| \leq \sup_{r \leq t} |\omega(r) - \omega'(r)|.
	\]
	Consequently, \(\omega \mapsto \inf_{s \in \mathbb{Q}, s \leq t} d_y (\omega(s))\) is continuous, \(\{\tau^+_y \leq t\}\) is closed and \(\tau^+_y\) is lower semi-continuous.
\end{proof}
Let \(A\) be the set of all \(a \in (l, x_0)\) such that \(\tau^-_a\) is \(P^0_{x_0}\)-a.s. continuous and let \(B\) be the set of all \(b \in (x_0, r)\) such that \(\tau^+_b\) is \(P^0_{x_0}\)-a.s. continuous.

\begin{lemma} The complements of \(A\) and \(B\) are both at most countable.\end{lemma}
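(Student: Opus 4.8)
The plan is to prove, for each of the families $(\tau^+_b)_{b \in (x_0, r)}$ and $(\tau^-_a)_{a \in (l, x_0)}$ separately, that the set of levels at which the hitting time fails to be $P^0_{x_0}$-a.s.\ continuous is at most countable. I treat $\tau^+_b$ and obtain $B$; the argument for $\tau^-_a$ and $A$ is the mirror image. By \Cref{lem: semi cont} the functional $\tau^+_b$ is lower semi-continuous, so it is continuous at a path $\omega$ as soon as it is also upper semi-continuous there. The first step is to locate the continuity points well enough: I claim that $\tau^+_b$ is continuous at every $\omega$ with $\tau^+_b(\omega) = \sigma^+_b(\omega)$, including the case where both values equal $+\infty$. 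Indeed, if $\sigma^+_b(\omega) = \tau^+_b(\omega) = t_0 < \infty$, then $\omega$ exceeds $b$ at times arbitrarily close to $t_0$ from above; fixing such a time $t^\ast$ with $\omega(t^\ast) > b$ and using that $\omega_n \to \omega$ locally uniformly forces $\tau^+_b(\omega_n) \le t^\ast$ for large $n$, whence $\limsup_n \tau^+_b(\omega_n) \le t_0$ and upper semi-continuity follows. The case $t_0 = \infty$ is immediate, since there lower semi-continuity alone already forces $\tau^+_b(\omega_n) \to \infty$. Consequently the discontinuity set of $\tau^+_b$ is contained in $N_b \triangleq \{\tau^+_b < \sigma^+_b\}$, and it suffices to prove that $P^0_{x_0}(N_b) = 0$ for all but countably many $b$.

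The second step converts the monotone structure in the level $b$ into countability. Pathwise, $b \mapsto \tau^+_b(\omega)$ is non-decreasing, and a short continuity argument shows that its right limit satisfies $\tau^+_{b+}(\omega) = \sigma^+_b(\omega)$ for every $\omega$; hence $N_b = \{\tau^+_b < \tau^+_{b+}\}$ is precisely the event that $b \mapsto \tau^+_b$ has a right jump at $b$. Now consider the bounded, non-increasing function $\Lambda(b) \triangleq E^0_{x_0}[e^{-\tau^+_b}]$ on $(x_0, r)$. By monotone convergence $\Lambda(b+) = E^0_{x_0}[e^{-\tau^+_{b+}}] = E^0_{x_0}[e^{-\sigma^+_b}]$, so that $\Lambda(b) - \Lambda(b+) = E^0_{x_0}[e^{-\tau^+_b} - e^{-\sigma^+_b}]$. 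Since $\tau^+_b \le \sigma^+_b$, the integrand is non-negative and strictly positive exactly on $N_b$; therefore $\Lambda(b) = \Lambda(b+)$ if and only if $P^0_{x_0}(N_b) = 0$. As a bounded monotone function, $\Lambda$ has at most countably many points of right-discontinuity, so $P^0_{x_0}(N_b) > 0$ for at most countably many $b$. This shows that the complement of $B$ in $(x_0, r)$ is at most countable.

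For $A$ one argues identically with $\tau^-_a$ in place of $\tau^+_b$ and $\sigma^-_a$ in place of $\sigma^+_b$. Here $a \mapsto \tau^-_a$ is non-increasing with left limit $\tau^-_{a-} = \sigma^-_a$, so $N^-_a \triangleq \{\tau^-_a < \sigma^-_a\}$ is the left-jump event, and the auxiliary function $\tilde\Lambda(a) \triangleq E^0_{x_0}[e^{-\tau^-_a}]$ is bounded and non-decreasing. The same computation gives $\tilde\Lambda(a) - \tilde\Lambda(a-) = E^0_{x_0}[e^{-\tau^-_a} - e^{-\sigma^-_a}]$, which vanishes iff $P^0_{x_0}(N^-_a) = 0$, so $\{a : P^0_{x_0}(N^-_a) > 0\}$ coincides with the countable set of left-discontinuities of $\tilde\Lambda$.

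I expect the main obstacle to be exactly this second step. The naive Fubini bound $\int P^0_{x_0}(N_b)\, db = E^0_{x_0}[\mathrm{Leb}\{b : \tau^+_b < \sigma^+_b\}] = 0$ only yields that $N_b$ is null for Lebesgue-a.e.\ $b$, which is strictly weaker than the countability asserted by the lemma (a co-null set of levels need not be co-countable). The decisive idea is therefore to recognize $N_b$ as the jump event of the monotone map $b \mapsto \tau^+_b$ and to encode it through the genuinely scalar monotone function $\Lambda$, whose discontinuities are automatically countable; establishing the identity $\tau^+_{b+} = \sigma^+_b$ (and its mirror $\tau^-_{a-} = \sigma^-_a$) and the continuity characterization of the first step is the technical heart of the argument.
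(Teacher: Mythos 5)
Your proof is correct and follows essentially the same route as the paper: reduce the discontinuity set of \(\tau^+_b\) to the event \(\{\tau^+_b < \sigma^+_b\}\) via the semi-continuity from \Cref{lem: semi cont} and the identity \(\tau^+_{b+} = \sigma^+_b\), then show this event is null for all but countably many levels. The only difference is that the paper cites \cite[Lemma~7.7]{EK} for the countability of \(\{b : P^0_{x_0}(\tau^+_b \neq \tau^+_{b+}) > 0\}\), whereas you re-prove that lemma from scratch with the standard Laplace-transform argument (monotonicity of \(b \mapsto E^0_{x_0}[e^{-\tau^+_b}]\)); your step-two discussion correctly identifies why the naive Fubini bound would not suffice.
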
 \label{lem: count}
\begin{proof}
	We restrict our attention to the set \(B\). The claim for \(A\) follows the same way. 
	Note that \(\sigma^+_b = \tau^+_{b+}\) for all \(b \in J^\circ\). It is well-known (\cite[Lemma 7.7 on p. 131]{EK}) that the set \(\{b> x_0 \colon P^0_{x_0}(\tau^+_b \not = \tau^+_{b+}) > 0\}\) is a most countable. 
	Consequently, by virtue of Lemma \ref{lem: semi cont}, the complement of \(B\) is at most countable. 
\end{proof}

It remains to prove \eqref{eq: conv to show}. The key step is the following lemma.
\begin{lemma} \label{lem: int1}
	\(\sup_{n \in \mathbb{N}} E_{x_0}^n \big[ \big(\tau_a^- \wedge \tau^+_b\big)^2 \big] < \infty\) for all \(a < x_0 < b\).
\end{lemma}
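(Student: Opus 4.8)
The plan is to derive the uniform \emph{second} moment bound from a uniform \emph{first} moment bound, and to get the latter from the weak convergence at the single starting point $x_0$ together with the natural scale structure. Throughout I write $\tau = \tau_a^- \wedge \tau_b^+$, which for $x_0 \in (a,b)$ coincides with the exit time $\gamma_a \wedge \gamma_b$ from $(a,b)$. First I would reduce the second moment to the first by a Markov argument: since $\tau - s = \tau \circ \theta_s$ on $\{s < \tau\}$,
\[
E^n_{x_0}[\tau^2] = 2\,E^n_{x_0}\Big[\int_0^\tau (\tau - s)\,ds\Big] = 2\,E^n_{x_0}\Big[\int_0^\tau u^n(\X_s)\,ds\Big] \le 2\,\|u^n\|_\infty\, E^n_{x_0}[\tau],
\]
where $u^n(y) = E^n_y[\tau]$ and $\|u^n\|_\infty = \sup_{y \in [a,b]} u^n(y)$. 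Letting the test function in \eqref{eq: def SM} increase to $\1_{(a,b)}$ gives $u^n(y) = \int G_{(a,b)}(y,\cdot)\,\m^n$, and since $0 \le G_{(a,b)} \le \tfrac{b-a}{4}$ this yields $\|u^n\|_\infty \le \tfrac{b-a}{4}\,\m^n((a,b))$. Hence the whole claim reduces to a uniform bound on $\m^n((a,b))$.

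Second, I would fix $a', b'$ with $[a,b] \subset (a',b')$ and $[a',b'] \subset J^\circ$. Using $G_{(a',b')}(x_0,\cdot) \ge c_0 > 0$ on $[a,b]$ gives $\m^n((a,b)) \le c_0^{-1} E^n_{x_0}[\tau_{(a',b')}]$, so it suffices to bound the single-start first moment $m_n := E^n_{x_0}[\tau_{(a',b')}]$ uniformly in $n$. I would do this by a self-improving (renewal) inequality: splitting at a deterministic time $T$ and using the Markov property,
\[
m_n \le T + E^n_{x_0}\big[\1_{\{\tau_{(a',b')} > T\}}\,E^n_{\X_T}[\tau_{(a',b')}]\big] \le T + P^n_{x_0}(\tau_{(a',b')} > T)\,\sup_{z \in (a',b')} E^n_z[\tau_{(a',b')}].
\]
The key analytic input is that, on natural scale, $G_{(a',b')}(z,y) = \tfrac{(z \wedge y - a')(b' - z \vee y)}{b' - a'}$ satisfies $\sup_z G_{(a',b')}(z,y) \le C'\, G_{(a',b')}(x_0,y)$ with $C' = \tfrac{b'-a'}{\min(x_0 - a',\, b'-x_0)}$; integrating against $\m^n$ gives $\sup_z E^n_z[\tau_{(a',b')}] \le C' m_n$. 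This comparison is precisely what lets me avoid any control of the exit time that is uniform over the starting point.

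Finally, I would bound the tail $P^n_{x_0}(\tau_{(a',b')} > T)$ uniformly in $n$. Since $\tau_{(a',b')} \le \tilde\tau := \sigma^-_{a'} \wedge \sigma^+_{b'}$, one has $P^n_{x_0}(\tau_{(a',b')} > T) \le P^n_{x_0}(\tilde\tau \ge T)$, and $\tilde\tau$ is upper semi-continuous by Lemma~\ref{lem: semi cont}, so $\{\tilde\tau \ge T\}$ is closed. The portmanteau theorem then gives $\limsup_n P^n_{x_0}(\tilde\tau \ge T) \le P^0_{x_0}(\tilde\tau \ge T)$. As the limit diffusion exits $[a',b']$ a.s.\ in finite time, the right-hand side tends to $0$ as $T \to \infty$; choosing $T$ with $P^0_{x_0}(\tilde\tau \ge T) \le \tfrac{1}{4C'}$ forces $C' P^n_{x_0}(\tau_{(a',b')} > T) \le \tfrac12$ for all large $n$. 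Plugged into the renewal inequality this gives $m_n \le 2T$ for large $n$, and since each $m_n$ is finite, $\sup_n m_n < \infty$; the reductions above then yield $\sup_n E^n_{x_0}[\tau^2] < \infty$.

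The hard part is exactly this last interplay. The natural-scale martingale controls only the quadratic-variation clock and not real time, so a purely pathwise bound is impossible and the uniform time bound \emph{must} come from weak convergence. The subtlety is that weak convergence only provides an upper bound on the tail through the closed-set half of portmanteau, which is why one has to pass to the upper semi-continuous stopping time $\tilde\tau$ rather than the lower semi-continuous $\tau$, and why the Green-function comparison is needed to convert the single-start tail bound into a bound on $m_n$ without ever controlling the exit time uniformly in the initial point.
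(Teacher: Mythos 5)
Your argument is correct, but it reaches the uniform second moment bound by a genuinely different route than the paper. The paper's proof turns the weak convergence at \(x_0\) into a bound \(\alpha<1\), uniform in \(n\) \emph{and} in the starting point \(x\in[a,b]\), on the tail probability \(P^n_x(\tau^-_a\wedge\tau^+_b>t)\): the passage from the single point \(x_0\) to all \(x\) is done pathwise by the strong Markov property (to leave \((a,b)\) from \(x_0\) one must first pass through \(x\); this is Lemma~\ref{lem: help1}), uniformity in \(n\) comes from upper semi-continuity of \(\sigma^{\pm}\) together with compactness of \(\{P^n_{x_0}\}\), and \(\alpha<1\) is supplied by the Bruggeman--Ruf theorem; iterating the Markov property then gives the geometric decay \(P^n_{x_0}(\tau>mt)\le\alpha^m\) and hence moments of every order. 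You instead work at the level of expectations: the occupation-time identity reduces the second moment to \(2\|u^n\|_\infty E^n_{x_0}[\tau]\), the Green-function representation of \(E^n_y[\tau]\) via \eqref{eq: def SM} together with the Harnack-type comparison \(\sup_z G_{(a',b')}(z,\cdot)\le C'\,G_{(a',b')}(x_0,\cdot)\) replaces the pathwise passage argument, and a renewal inequality closed up by a portmanteau tail estimate at a single large time \(T\) replaces the geometric iteration. Your route needs only the elementary fact that the limit diffusion leaves \([a',b']\) in finite time (rather than that it hits points arbitrarily fast), and it never requires control uniform over the starting point; the paper's route is shorter, stays entirely at the level of exit-time distributions without invoking the speed-measure representation, and yields all moments for free. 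Two points you should make explicit: the enlargement \([a,b]\subset(a',b')\) with \([a',b']\subset J^\circ\) uses that \(a,b\in J^\circ\) (true in the lemma's intended scope, where \(a\in A\subset(l,x_0)\) and \(b\in B\subset(x_0,r)\)), and the claim \(P^0_{x_0}(\tilde\tau\ge T)\to 0\) should be justified by dominating \(\tilde\tau=\sigma^-_{a'}\wedge\sigma^+_{b'}\) by \(\gamma_{a''}\wedge\gamma_{b''}\) for a slightly larger interval \([a'',b'']\subset J^\circ\), whose expectation under \(P^0_{x_0}\) is finite.
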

To prove this we first establish two preliminary results.
	\begin{lemma} \label{lem: help1}
	For all \(n \in \mathbb{Z}_+, t > 0\) and \(a \leq x, x_0 \leq b\) we have 
	\[P_x^n (\tau^-_a \wedge \tau^+_b > t) \leq P^n_{x_0} (\sigma^-_a \geq t) \vee P^n_{x_0} (\sigma^+_b\geq t).\]
\end{lemma}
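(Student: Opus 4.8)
The plan is to reduce the two-sided exit problem on the left-hand side to a one-sided hitting problem and then transfer the starting point from $x$ to $x_0$ by a monotonicity estimate. First I would dispose of the degenerate cases: if $x \in \{a,b\}$ then $\tau^-_a \wedge \tau^+_b = 0$ $P^n_x$-a.s. and the left-hand side vanishes, so we may assume $a < x < b$. By the evident left--right symmetry it then suffices to treat the case $x \le x_0$ and to bound the left-hand side by $P^n_{x_0}(\sigma^-_a \ge t)$; the case $x \ge x_0$ is handled in the same way with $\tau^+_b$ and $\sigma^+_b$ in place of $\tau^-_a$ and $\sigma^-_a$.

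Since $\tau^-_a \wedge \tau^+_b \le \tau^-_a$, we immediately obtain
\[
P^n_x(\tau^-_a \wedge \tau^+_b > t) \le P^n_x(\tau^-_a > t).
\]
Everything thus reduces to the two comparisons $P^n_x(\tau^-_a > t) \le P^n_{x_0}(\tau^-_a > t)$ (the monotonicity estimate) and $P^n_{x_0}(\tau^-_a > t) \le P^n_{x_0}(\sigma^-_a \ge t)$. The latter is elementary: as $\{\X_s < a\} \subseteq \{\X_s \le a\}$ we have $\tau^-_a \le \sigma^-_a$ pointwise, hence $\{\tau^-_a > t\} \subseteq \{\sigma^-_a > t\} \subseteq \{\sigma^-_a \ge t\}$.

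For the monotonicity estimate the idea is a first-passage decomposition at the hitting time $\gamma_x = \inf(s \ge 0 \colon \X_s = x)$ under $P^n_{x_0}$. Because paths are continuous and $a < x \le x_0$, a path started at $x_0$ must pass through level $x$ before it can reach $(-\infty, a]$, so $\gamma_x \le \tau^-_a$ holds $P^n_{x_0}$-a.s.; in particular $\{\gamma_x > t\} \subseteq \{\tau^-_a > t\}$, while on $\{\gamma_x \le t\}$ we have $\X_{\gamma_x} = x$ and $\tau^-_a = \gamma_x + \tau^-_a \circ \theta_{\gamma_x}$. Writing $g(s) \triangleq P^n_x(\tau^-_a > s)$ and applying the strong Markov property at $\gamma_x$ gives
\[
P^n_{x_0}(\tau^-_a > t) = P^n_{x_0}(\gamma_x > t) + E^n_{x_0}\big[\1_{\{\gamma_x \le t\}}\, g(t - \gamma_x)\big].
\]
Since $g$ is non-increasing and $t - \gamma_x \le t$ on $\{\gamma_x \le t\}$, we have $g(t-\gamma_x) \ge g(t)$, so
\[
P^n_{x_0}(\tau^-_a > t) \ge P^n_{x_0}(\gamma_x > t) + P^n_{x_0}(\gamma_x \le t)\, g(t) \ge g(t) = P^n_x(\tau^-_a > t),
\]
the last inequality using $g(t) \le 1$. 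Chaining the three displays proves the bound when $x \le x_0$, and symmetry finishes the proof.

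The main obstacle is this monotonicity step: one must apply the strong Markov property at the random time $\gamma_x$ with the correct residual horizon $t - \gamma_x$ and exploit the monotonicity of $s \mapsto g(s)$, while making sure the reduction to $\gamma_x$ genuinely rests on path-continuity (which forces the crossing of level $x$) rather than on any finer property of diffusions on natural scale. Everything else amounts to comparing the hitting sets that define $\tau^{\pm}$ and $\sigma^{\pm}$, which is routine.
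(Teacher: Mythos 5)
Your proof is correct and rests on the same idea as the paper's: reduce the two-sided exit to a one-sided hitting time and compare the starting points \(x\) and \(x_0\) via the strong Markov property applied at the first passage through level \(x\). The only (cosmetic) difference is that you work with the survival probabilities and keep the exact residual horizon \(t-\gamma_x\) together with monotonicity of \(g\), whereas the paper works with the complementary events and simply bounds \(\tau_a^-\circ\theta_{\tau_x^-}\le t\) on \(\{\tau_a^- < t\}\).
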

\begin{proof}
	Let us first take \(a \leq x \leq x_0\). The strong Markov property yields that 
	\begin{align*}
	P_{x_0}^n (\sigma^-_a < t) &\leq P_{x_0}^n (\tau^-_a < t) 
	\\&\leq P_{x_0}^n (\tau^-_x  < \infty, \tau_a^- (\theta_{\tau^-_x}\X) \leq t)
	\\&= E_{x_0}^n \big[ \1_{\{\tau^-_x < \infty\}} P_{\X_{\tau^-_x}}^n (\tau^-_a \leq t)\big]
	\\&= P_{x_0}^n (\tau^-_x < \infty) P_x^n (\tau^-_a \leq t)
	\\&\leq P_x^n (\tau^-_a  \wedge \tau^+_b \leq t).
	\end{align*}
	This yields the claimed inequality for \(a \leq x \leq x_0\).
	
	For \(x_0 \leq x \leq b\) we get from the same computation that
	\[
	P_{x_0}^n (\sigma^+_b < t) \leq P_x^n (\tau^+_b\leq t) \leq P_x^n (\tau^-_a  \wedge \tau^+_b \leq t).
	\]
	The proof is complete.
\end{proof}	
	\begin{lemma} \label{lem: alpha less 1}
		For all \(t > 0\) and \(a < b\) we have 
	\begin{align} \label{eq: to show1}
	\alpha \triangleq \sup ( P_x^n (\tau^-_a  \wedge \tau^+_b > t) \colon n \in \mathbb{Z}_+,a \leq x \leq b) < 1.
	\end{align}
	\end{lemma}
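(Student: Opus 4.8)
The plan is to bound $\alpha$ by a quantity that involves only a single, fixed starting point, and then to exploit the assumed weak convergence together with the semicontinuity from \cref{lem: semi cont}. First I would fix a pivot $x_0 \in (a,b)$, say $x_0 = (a+b)/2$. By \cref{lem: help1}, for every $n \in \mathbb{Z}_+$ and every $x \in [a,b]$ we have $P^n_x(\tau^-_a \wedge \tau^+_b > t) \leq P^n_{x_0}(\sigma^-_a \geq t) \vee P^n_{x_0}(\sigma^+_b \geq t)$, and hence $\alpha \leq \sup_{n \in \mathbb{Z}_+} \beta_n$, where $\beta_n \triangleq P^n_{x_0}(\sigma^-_a \geq t) \vee P^n_{x_0}(\sigma^+_b \geq t)$. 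It therefore suffices to show $\sup_n \beta_n < 1$.

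The key input is the following property of every regular diffusion $(x \mapsto Q_x)$ on natural scale: for $a < x_0 < b$ with $a, b \in J^\circ$ and every $s > 0$ one has $Q_{x_0}(\sigma^+_b < s) > 0$ and $Q_{x_0}(\sigma^-_a < s) > 0$. To see the first inequality, note $\{\X_{s/2} > b\} \subset \{\sigma^+_b < s\}$, so it is enough to verify $Q_{x_0}(\X_{s/2} > b) > 0$. This is where regularity and the interior assumption enter: a regular diffusion on natural scale has a strictly positive transition density on $J^\circ$ with respect to its speed measure, and the speed measure of a regular diffusion charges every nonempty open subinterval of $J^\circ$ (see e.g. \cite{breiman1968probability,itokean74}); choosing $b' \in (b, r)$ then gives $Q_{x_0}(\X_{s/2} > b) \geq Q_{x_0}(\X_{s/2} \in (b,b')) > 0$, and the bound for $\sigma^-_a$ follows symmetrically using $a > l$. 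I expect establishing this positivity to be the main obstacle; everything else is soft. A more elementary alternative, if one wishes to avoid transition densities, is to use the stochastic monotonicity of a one-dimensional diffusion in its starting point together with a contradiction argument against regularity, but the transition-density route is cleanest. Applying the property with $Q = P^n$ for each fixed $n \in \mathbb{Z}_+$ shows $\beta_n < 1$ for every $n$, and applying it with $Q = P^0$ shows $P^0_{x_0}(\sigma^+_b \geq t) < 1$ and $P^0_{x_0}(\sigma^-_a \geq t) < 1$.

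It then remains to control the tail as $n \to \infty$, and here the assumed weak convergence does the work. By \cref{lem: semi cont} the maps $\sigma^\pm$ are upper semicontinuous, so the sets $\{\sigma^+_b \geq t\}$ and $\{\sigma^-_a \geq t\}$ are closed. Since $P^n_{x_0} \to P^0_{x_0}$ weakly, the portmanteau theorem yields $\limsup_n P^n_{x_0}(\sigma^+_b \geq t) \leq P^0_{x_0}(\sigma^+_b \geq t)$ and likewise for $\sigma^-_a$, whence $\limsup_n \beta_n \leq P^0_{x_0}(\sigma^-_a \geq t) \vee P^0_{x_0}(\sigma^+_b \geq t) \triangleq \rho < 1$ by the previous paragraph. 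Consequently there is an $N$ with $\beta_n \leq (1+\rho)/2 < 1$ for all $n \geq N$, while $\beta_0, \dots, \beta_{N-1}$ are finitely many numbers each strictly below $1$. Taking the maximum gives $\sup_n \beta_n < 1$, and therefore $\alpha < 1$, as claimed.

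To summarize the order of operations: (i) reduce to a single pivot via \cref{lem: help1}; (ii) prove the positivity statement $Q_{x_0}(\sigma^\pm < s) > 0$ for regular diffusions, which is the crux and the only place regularity and the interior location of $a, b$ are genuinely used; (iii) convert this into an upper semicontinuity bound on the limit through \cref{lem: semi cont} and the portmanteau theorem; and (iv) combine the tail estimate with the finitely many initial terms to conclude $\sup_n \beta_n < 1$.
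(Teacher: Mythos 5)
Your proof follows essentially the same route as the paper: reduce to a single pivot via Lemma~\ref{lem: help1}, exploit the upper semicontinuity of \(\sigma^\pm\) from Lemma~\ref{lem: semi cont} together with the assumed weak convergence to control the supremum over \(n\) (the paper extracts a maximizing index from the compactness of \(\{P^n_{x_0}\colon n\in\mathbb{Z}_+\}\) and the upper semicontinuity of \(P\mapsto P(\sigma^\pm\geq t)\), whereas you use the Portmanteau \(\limsup\) bound and treat the finitely many initial terms separately --- these are equivalent), and finally invoke the fact that a regular diffusion leaves \((a,b)\) arbitrarily fast with positive probability, which the paper simply cites from Bruggeman--Ruf while you derive it from the strict positivity of the transition density with respect to the speed measure. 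The only point to adjust is the choice of pivot: take it to be the distinguished point \(x_0\) of Lemma~\ref{lem: key1} (which automatically lies in \((a,b)\) since \(a\in A\subset(l,x_0)\) and \(b\in B\subset(x_0,r)\)) rather than \((a+b)/2\), because under the hypothesis of Lemma~\ref{lem: key1} the weak convergence needed for your Portmanteau step is only available at that point.
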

\begin{proof}
	By Lemma \ref{lem: semi cont}, \(\sigma_{a}^\pm\) are upper semi-continuous. Hence, by \cite[Theorem~15.5]{aliprantis}, the maps
	\(
\M\ni P \mapsto P (\sigma_{a}^\pm \geq t) \in [0, 1]
	\)
	are also upper semi-continuous. 
	As we assume that \(P^n_{x_0} \to P^0_{x_0}\) weakly, the set \(\{P_{x_0}^n \colon n \in \mathbb{Z}_+\}\) is compact in \(\M\). Now, since upper semi-continuous functions attain a maximum value on a compact set (\cite[Theorem 2.43]{aliprantis}), there exists an \(N_{a}^\pm \in \mathbb{Z}_+\) such that \[\sup_{n \in \mathbb{Z}_+} P_{x_0}^n (\sigma_{a}^\pm \geq t) = P_{x_0}^{N_{a}^\pm} (\sigma_{a}^\pm \geq t).\]
	As, thanks to \cite[Theorem 1.1]{bruggeman}, regular diffusions hit points arbitrarily fast with positive probability, we have \[P_{x_0}^{N_{a}^\pm} (\sigma_{a}^\pm  \geq t) < 1\] and \eqref{eq: to show1} follows from Lemma \ref{lem: help1}.
	\end{proof}
	\begin{proof}[Proof of Lemma \ref{lem: int1}]
		We fix \(t > 0\).
	Using the Markov property, for every \(m \in \mathbb{Z}_+\) we get
	\begin{align*}
	P_{x_0}^n (\tau^-_a  \wedge \tau^+_b > mt + t ) 
	&= P_{x_0}^n (\tau^-_a  \wedge \tau^+_b> mt, \tau^-_a (\theta_{mt}\X) \wedge \tau^+_b (\theta_{mt}\X) > t )
	\\&= E_{x_0}^n \big[ \1_{\{\tau^-_a\wedge \tau^+_b > mt\}} P_{\X_{mt}}^n (\tau^-_a \wedge \tau^+_b > t)\big]
	\\&\leq P_{x_0}^n (\tau^-_a  \wedge \tau^+_b > mt) \ \alpha.
	\end{align*}
	Thus, by induction we obtain for every \(m \in \mathbb{Z}_+\) that
	\[
	P_{x_0}^n(\tau^-_a \wedge \tau^+_b > mt) \leq \alpha^m. 
	\]
	Finally, we can estimate
	\begin{align*}
	E^n_{x_0} \big[ \big(\tau^-_a \wedge \tau^+_b\big)^2 \big] 
	&= \sum_{m = 0}^\infty \int_{mt}^{(m + 1)t} 2 s P^n_{x_0}(\tau^-_a \wedge \tau^+_b > s) ds
	\leq \sum_{m = 0}^\infty (m + 1)^2 t^2 \alpha^{m}. 
	\end{align*}
	As the final term is finite by Lemma \ref{lem: alpha less 1} and independent of \(n\), the proof is complete.
\end{proof}

We are in the position to finish the proof of Lemma \ref{lem: key1}. Take \(f \in C_c (J^\circ), a \in A\) and \(b \in B\). 
First of all, as \(a < x_0 < b\), for every \(n \in \mathbb{Z}_+\) we have \(P^n_{x_0}\)-a.s. 
\[
\int_0^{\gamma_a \wedge \gamma_b} f (\X_s) ds = \int_0^{\tau^-_a\wedge \tau^+_b} f (\X_s) ds.
\]
Then, by Lemma \ref{lem: int1}, we have 
\[
\sup_{n \in \mathbb{N}}E^n_{x_0} \Big[ \Big( \int_0^{\tau^-_a \wedge \tau^+_b} f (\X_s) ds \Big)^2 \Big] \leq \|f\|^2_\infty \sup_{n \in \mathbb{N}} E^n_{x_0} \big[ \big(\tau^-_a \wedge \tau^+_b \big)^2 \big] < \infty.
\]
Thus, the family \[\Big\{P^n_{x_0} \circ \Big(\int_0^{\tau^-_a  \wedge \tau^+_b} f(\X_s) ds\Big)^{-1} \colon n \in \mathbb{N}\Big\}\] is uniformly integrable. Furthermore, by definition of the sets \(A\) and \(B\), the function 
\[
\omega \mapsto \int_0^{\tau^-_a (\omega) \wedge \tau^+_b (\omega)} f (\omega(s)) ds
\]
is \(P^0_{x_0}\)-a.s. continuous.
Consequently, the continuous mapping theorem yields \eqref{eq: conv to show}. The proof of Lemma \ref{lem: key1} is complete. 
\qed

\subsection{Proof of convergence up to the boundaries}
We now prove property (b) from Definition \ref{def: LT conv}. Assume that \(l \in J\). 
In the following we distinguish between the cases where \((x \mapsto P^0_x)\) is absorbing or reflecting\footnote{We call a (closed) boundary point \emph{reflecting} if it is instantaneously or slowly reflecting in the sense of \cite[Section 16.7]{breiman1968probability}. Further, we call a closed boundary point \emph{absorbing} if it is not reflecting. Thus, a boundary point is absorbing if it is either exit or regular--absorbing in the sense of \cite[Section 16.7]{breiman1968probability}.} at the boundary point \(l\). 
\\

\noindent
\emph{The absorbing case.}
Assume that \(l\) is an absorbing boundary point of \((x \mapsto P^0_x)\). This case is captured by the speed measure via the property \(\m^0(\{l\}) = \infty\).
For every \(b \in J^\circ\) and \(t > 0\) the set \(\{\tau^+_b > t\}\) is open by Lemma \ref{lem: semi cont} and, using the Portmanteau theorem, we~get
\[
\liminf_{n \to \infty} P^n_l (\tau^+_b > t) \geq P^0_l (\tau^+_b > t) = 1.
\]
In other words, if \(X^1, X^2, \dots\) are random variables with laws \(P^1_l \circ (\tau^+_b)^{-1}, P^2_l \circ (\tau^+_b)^{-1}, \dots\), then \(X^n \to \infty\) in probability, which yields that \(E^n_l [ \tau^+_b] \to \infty\). Now, take \(0 \leq f \in C(J)\) such that \(f (l) > 0\) and \(f = 0\) off \([l, y)\) for some \(y \in J^\circ\).
Let \(b \in (l, y)\) be such that \(f > 0\) on \([l, b]\) and choose \(y < z \in J^\circ\).
By \cite[Proposition VII.3.10]{RY}, we have 
\begin{align}\label{eq: sym GF}
E^n_l \Big[ \int_0^{\tau^+_{z}} \frac{f (\X_s) ds}{G_{[l, z)} (l, \X_s)} \Big] = \int f (x) \m^n(dx)
\end{align}
where \(G_{[l, z)}\) is the symmetrized Green function as given in \cite[Eq. 16.46]{breiman1968probability}. Now, we obtain
\[
\int f (x) \m^n(dx) \geq E^n_l \Big[ \int_0^{\tau^+_{b}} \frac{f (\X_s) ds}{G_{[l, z)} (l, \X_s)} \Big] \geq \min_{x \in [l, b]} \frac{f (x)}{G_{[l, z)} (l, x)} E^n_l \big[ \tau^+_b \big] \to \infty.
\]
This completes the proof for the absorbing case.
\\

\noindent
\emph{The reflecting case.}
Next, we assume that \(l\) is a reflecting boundary point of \((x \mapsto P^0_x)\). 
Take \(z \in J^\circ\) and \(t > 0\). Using Lemma \ref{lem: semi cont}, the Portmanteau theorem and \cite[Theorem~1.1]{bruggeman}, we get
\[
\liminf_{n \to \infty} P^n_l (\sigma^+_z < t) \geq P^0_l (\sigma^+_z < t) > 0.
\]
Recall the Urysohn property: A sequence in a metrizible space converge to a limit \(L\) if and only if any of its subsequences contains a further subsequence which converges to \(L\). Thus, to prove \(\int f (x) \m^n(dx) \to \int f (x) \m^0(dx)\) we have to prove that any subsequence of \(\int f (x) \m^1(dx),\) \(\int f(x) \m^2(dx), \dots\) contains a further subsequence which converges to \(\int f(x) \m^0(dx)\). Let \(k(1), k(2), \dots\) be an arbitrary subsequence of \(1, 2, \dots\). Then, 
\[
\liminf_{n \to \infty} P^{k(n)}_l (\sigma^+_z < t) \geq \liminf_{n \to \infty} P^n_l (\sigma^+_z < t) > 0.
\]
Thus, there exists a subsequence \(m(1), m(2), \dots\) of \(k(1), k(2), \dots\) such that \[P^{m(n)}_l (\sigma^+_z < t) > 0, \quad \forall n \in \mathbb{N}.\]  Let us take this subsequence and prove part (b) of the definition of \(\m^{m(n)} \Rightarrow \m^0\). Once we have done this, we can conclude part (b) of \(\m^n \Rightarrow \m^0\).
To simplify our notation, we assume that \(P^n_l (\sigma^+_z < t) > 0\) for every \(n \in \mathbb{N}\). Consequently, for each \(n \in \mathbb{Z}_+\) the point \(l\) is a reflecting boundary of \((x \mapsto P^n_x)\) and, recalling again \cite[Theorem 1.1]{bruggeman}, we have 
\begin{align}\label{eq: hypo last pf} 
P^n_l (\sigma^+_c < t) > 0 \text{ for all } n \in \mathbb{Z}_+ \text{ and } c \in J^\circ.
\end{align}

We claim that for all but countably many \(b \in (l, r)\) and any \(f \in C_c (J)\)
\begin{align} \label{eq: conv to show ii}
E^n_l \Big[ \int_0^{\tau^+_b } f (\X_s) ds \Big] \to E^0_l \Big[ \int_0^{\tau^+_b} f (\X_s) ds \Big].
\end{align}
Recalling \eqref{eq: sym GF}, this yields property (b) of \(\m^n \Rightarrow \m^0\).
As in Section \ref{sec: pf a}, we can take \(b \in (l, r)\) such that the map
\[
\omega \mapsto \int_0^{\tau^+_b (\omega)} f (\omega(s)) ds
\]
is \(P^0_l\)-a.s. continuous for all \(f \in C_c(J)\). Thanks to the continuous mapping theorem, it now suffices to prove that the family \[\Big\{P^n_l \circ \Big(\int_0^{\tau^+_b} f (\X_s) ds\Big)^{-1} \colon n \in \mathbb{N}\Big\}\] is uniformly integrable, which follows for instance from
\begin{align}\label{eq: second moment bound}
\sup_{n \in \mathbb{N}} E^n_l \big[ ( \tau^+_b )^2 \big] < \infty.
\end{align}
Set
\[
\beta \triangleq \sup ( P_x^n (\tau^+_b > t) \colon n \in \mathbb{Z}_+, l \leq x \leq b).
\]
Provided \(\beta < 1\), we get as in the proof of Lemma \ref{lem: int1} that 
\[
E_l^n \big[ (\tau^+_b)^2 \big] \leq \sum_{m = 0}^\infty (m + 1)^2 t^2 \beta^m < \infty, 
\]
which implies \eqref{eq: second moment bound}. Thus, \eqref{eq: conv to show ii} is proved once we show that \(\beta < 1\). As in the proof of Lemmata~\ref{lem: help1}~and~\ref{lem: alpha less 1}, we obtain that
\[
\beta \leq P_{l}^{N^+_b} (\sigma^+_b \geq t), 
\]
for some \(N^+_b \in \mathbb{Z}_+\). Thanks to \eqref{eq: hypo last pf}, we get \[P_{l}^{N^+_b} (\sigma^+_b \geq t) < 1\] and \eqref{eq: conv to show ii} holds. This completes the proof for the reflecting case.
\\

\noindent
Finally, let us comment on the remaining property (c) from Definition \ref{def: LT conv}. Its proof is similar to those for the property (b). Hence, we omit the details and leave them to the reader. We conclude the proof of Theorem \ref{theo: new main1}.
\qed

\bibliographystyle{plain}

\end{document}